\theoremstyle{plain}
\newtheorem{theorem}{Theorem}[section]
\newtheorem{lemma}[theorem]{Lemma}
\newtheorem*{conjecture*}{Conjecture}
\newtheorem*{question*}{Question}
\theoremstyle{definition}
\newtheorem{definition}[theorem]{Definition}
\newtheorem{remark}[theorem]{Remark}
\newtheorem*{notation}{Notation}
\numberwithin{equation}{section}
\title[Time derivative estimates]{Time derivative estimates for parabolic $p$-Laplace equations and applications to optimal regularity}
\author{Se-Chan Lee}
\address{School of Mathematics, Korea Institute for Advanced Study, Seoul 02455, Republic of Korea}
\email{sechan@kias.re.kr}
\author{Yuanyuan Lian}
\address{Departamento de An\'{a}lisis Matem\'{a}tico,
Instituto de Matem\'{a}ticas IMAG, Universidad de Granada}
\email{lianyuanyuan.hthk@gmail.com; yuanyuanlian@correo.ugr.es}
\author{Hyungsung Yun}
\address{School of Mathematics, Korea Institute for Advanced Study, Seoul 02455, Republic of Korea}
\email{hyungsung@kias.re.kr}
\author{Kai Zhang}
\address{Departamento de Geometr\'{i}a y Topolog\'{i}a,
Instituto de Matem\'{a}ticas IMAG, Universidad de Granada}
\email{zhangkaizfz@gmail.com; zhangkai@ugr.es}
\subjclass[2020]{Primary 35B65; Secondary 35D40, 35K92, 35K65}
\keywords{Parabolic $p$-Laplace equations; Bernstein technique; Fully nonlinear equations; Optimal regularity}
\thanks{Se-Chan Lee has been supported by the KIAS Individual Grant (No. MG099001) at Korea Institute for Advanced Study. Hyungsung Yun has been supported by the KIAS Individual Grant (No. MG097801) at Korea Institute for Advanced Study.
This research has been also financially supported by the Project PID2020-118137GB-I00 funded by MCIN/AEI /10.13039/501100011033.}
\begin{document}

\begin{abstract}
	We establish the boundedness of time derivatives of solutions to parabolic $p$-Laplace equations. Our approach relies on the Bernstein technique combined with a suitable approximation method. As a consequence, we obtain an optimal regularity result with a connection to the well-known $C^{p'}$-conjecture in the elliptic setting. Finally, we extend our method to treat global regularity results for both fully nonlinear and general quasilinear degenerate parabolic problems. 
\end{abstract}
\maketitle

%
%
\section{Introduction}
Partial differential equations that exhibit degeneracies or singularities naturally arise in the modeling of various phenomena, such as the motion of multi-phase fluids in porous media, phase transitions in materials science and option pricing in financial markets. A prototype example, which continues to present significant challenges in regularity theory, is the parabolic $p$-Laplace equations given by
\begin{equation} \label{eq-plaplace}
    u_t =\Delta_p u \coloneqq \mathrm{div}(|Du|^{p-2}Du)
\end{equation}
with $p \in (1, \infty)$. We refer to the monograph by Urbano \cite{Urb08} for further motivation and applications of the parabolic $p$-Laplace equations.

In this paper, we are concerned with the optimal regularity of solutions $u$ to \eqref{eq-plaplace} with $p \in (2, \infty)$. To this end, we establish the boundedness of the time derivative $u_t$ by applying the Bernstein technique together with suitable approximations. To the best of our knowledge, the regularity of $u$ in time has been studied in two different directions: $u \in C_t^{\alpha}$ for some $\alpha \in (0,1)$ or $u_t \in L^2$; see \Cref{history} for a detailed historical background. 
\subsection{Optimal regularity} 
It follows from a direct calculation that
\begin{equation*} 
    \Delta_p (|x|^{p'}) =  n\left(p'\right)^{p-1} \eqqcolon c_{n,p},
\end{equation*}
where $p'\coloneqq p/(p-1)$ is the H\"older conjugate of $p$. This observation gives rise to the following well-known open problem related to the optimal regularity in the elliptic setting, namely $C^{p'}$-regularity conjecture:
\begin{conjecture*}[$C^{p'}$-regularity conjecture]
    Let $p \in (2, \infty)$. Solutions $u$ to 
\begin{equation*}
    -\Delta_p u= f \in L^{\infty}(B_1)
\end{equation*}
are locally of class $C^{1, \frac{1}{p-1}} = C^{p'}$.
\end{conjecture*}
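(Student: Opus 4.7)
The stated problem is the celebrated $C^{p'}$-regularity conjecture in the elliptic setting, a longstanding open question in the theory of the $p$-Laplacian, so any serious attack must be accompanied by caveats. Nonetheless, in the spirit of the present paper, which leverages the Bernstein technique on a suitable regularisation to obtain pointwise bounds on $u_t$ in the parabolic setting, my plan would be to adapt the same mechanism to the stationary equation and aim for a single pointwise estimate on $D^2 u$ weighted by a power of $|Du|$ whose integral form is precisely $C^{1,1/(p-1)}$.

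First I would regularise, replacing the singular operator by the uniformly elliptic family $\Delta_p^{\varepsilon} v \coloneqq \mathrm{div}\bigl((|Dv|^2 + \varepsilon)^{(p-2)/2} Dv\bigr)$ with smooth right-hand side $f^{\varepsilon}$, producing classical solutions $u^{\varepsilon}\in C^{\infty}$. The target pointwise estimate, uniform in $\varepsilon$, would be
\begin{equation*}
    |D^2 u^{\varepsilon}(x)| \, \bigl(|Du^{\varepsilon}(x)|^2+\varepsilon\bigr)^{(p-2)/2} \le C\bigl(n,p,\|f\|_{L^{\infty}}\bigr);
\end{equation*}
integrating, and using that $\xi\mapsto|\xi|^{p-2}\xi$ is H\"older-invertible with exponent $1/(p-1)$, this would yield the conjectured $C^{1,1/(p-1)}$ bound in the limit $\varepsilon\to 0^{+}$. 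To produce such an estimate I would follow the Bernstein philosophy: introduce an auxiliary function $w$ built from $|Du^{\varepsilon}|^2+\varepsilon$ (the elliptic analogue of the quantity the present paper controls in the time direction), differentiate the regularised equation, and derive a linear differential inequality for $w$ that absorbs a multiple of $|D^2 u^{\varepsilon}|^2(|Du^{\varepsilon}|^2+\varepsilon)^{p-2}$ on the right-hand side. A maximum-principle argument for the linearised operator then converts this into the desired pointwise bound.

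The main obstacle, and the fundamental reason the conjecture has resisted proof for decades, is the collapse of ellipticity on the critical set $\{Du=0\}$. For $p>2$ the eigenvalues of the linearised operator behave like $(|Du|^2+\varepsilon)^{(p-2)/2}$, which vanishes at $\varepsilon=0$ precisely where the Bernstein auxiliary function concentrates; any naive implementation produces a differential inequality whose coefficients degenerate in exactly the wrong way, and the required cancellation between the positive second-derivative term and the negative contributions from the drift appears to demand an algebraic identity that has not yet been found. A realistic fallback is to prove the weighted bound outside a small neighbourhood of $\{Du=0\}$, couple it with a Hausdorff-dimensional control of the critical set, and obtain $C^{1,\alpha}$ for some explicit $\alpha<1/(p-1)$, essentially recovering the current state of the art, with the gap to the full conjecture presumably requiring genuinely new ideas beyond the Bernstein/approximation framework.
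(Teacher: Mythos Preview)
The statement you were asked to prove is not a theorem of the paper; it is explicitly labelled and treated as an open \emph{conjecture}. The paper does not attempt to prove it. It merely records the conjecture, notes that it was settled for $n=2$ and only partially for $n\ge 3$ by Ara\'ujo--Teixeira--Urbano, and then uses the \emph{hypothetical} optimal exponent $\bar\alpha$ from the elliptic problem as input to the parabolic regularity result (Theorem~1.2). In particular, there is no ``paper's own proof'' against which to compare your attempt.

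Your proposal is therefore not a proof but rather an honest discussion of why a direct Bernstein attack on the elliptic problem is unlikely to succeed, and you correctly identify the obstruction: the degeneracy of the linearised coefficients on $\{Du=0\}$ prevents the maximum-principle step from closing. This is a reasonable commentary, but it should not be presented as a proof attempt. If the task is to supply the paper's argument for this statement, the correct response is simply that the paper offers none: the conjecture is stated as motivation, and the paper's contribution goes in the \emph{opposite} direction---it shows that the parabolic optimal-regularity question reduces to the elliptic one via the time-derivative bound, rather than resolving the elliptic conjecture itself.
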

The $C^{p'}$-regularity conjecture was solved completely for $n=2$ and partially for $n \geq 3$ by Ara\'ujo--Teixeira--Urbano in \cite{ATU17} and \cite{ATU18}, respectively, based on the compactness argument. When it comes to the parabolic setting, since $u(x, t)=|x|^{p'} + c_{n,p} t$ is a solution to \eqref{eq-plaplace}, viscosity solutions to \eqref{eq-plaplace} cannot have better regularity than $C^{1, \frac{1}{p-1}}$ in the spatial variables. 

We now claim that the Lipschitz regularity in the time variable of solutions to \eqref{eq-plaplace} is strongly related to the optimal regularity in the spatial variables. To illustrate this, we utilize the scaling argument between the spatial variables and the time variable. Let $u$ be a solution to \eqref{eq-plaplace} and suppose that $Du$ and $u$ have uniform H\"older estimates in the spatial variables and the time variable respectively, that is, 
\begin{equation} \label{scaling_order}
    |Du(x,t)-Du(y,s)|\leq C(|x-y|^\alpha + |t-s|^\mu) \quad \text{and} \quad
    |u(x,t)-u(x,s)|\leq C |t-s|^\nu.
\end{equation}
For given $\alpha \in (0,1)$, let us find $\mu, \nu \in (0,1)$ that are adequate for the scaling relation. 
If we let $v(x,t) \coloneqq (r\rho)^{-1}u(rx,r^2 \rho^{2-p}t)$ for $r>0$ and $\rho >0$, then $v$ is also a solution to \eqref{eq-plaplace} and so $v$ satisfies the uniform estimates \eqref{scaling_order}. By rescaling back to $u$, the uniform estimates \eqref{scaling_order} for $v$ can be written as estimates for $u$: 
\begin{equation*}
   |Du(x',t')-Du(y',s')|\leq C\left(\frac{\rho}{r^\alpha}|x'-y'|^\alpha + \frac{\rho}{(r^2\rho^{2-p})^\mu}|t'-s'|^\mu \right)
\end{equation*}
and
\begin{equation*}
    |u(x', t')-r(x',s')|\leq C \frac{r\rho}{(r^2\rho^{2-p})^\nu}|t'-s'|^\nu,
\end{equation*}
where $x'=rx$, $y'=ry$, $t'=r^2\rho^{2-p}t$ and $s'=r^2\rho^{2-p}s$. Since the uniform estimates \eqref{scaling_order} are invariant with respect to the scaling, we obtain  
\begin{equation*}
    \frac{\rho}{r^\alpha}=1, \quad
    \frac{\rho}{(r^2\rho^{2-p})^\mu}= 1 \quad \text{and} \quad 
     \frac{r\rho}{(r^2\rho^{2-p})^\nu}=1.
\end{equation*}
By taking $\rho=r^\alpha$, we have 
\begin{equation*}
    \mu = \frac{\alpha}{2+\alpha(2-p)} \quad \text{and} \quad 
    \nu = \frac{1+\alpha}{2+\alpha(2-p)}.
\end{equation*}

If the optimal regularity in the spatial variables of solutions to \eqref{eq-plaplace} is proved to be $C^{1,\frac{1}{p-1}}$, that is, $\alpha=1/(p-1)$, then $\nu=1$. Hence, this observation naturally leads to the following question: 
\begin{question*}
    Is the time derivative $u_t$ of a solution $u$ to \eqref{eq-plaplace} bounded?
\end{question*}

Our first main theorem provides a positive answer to this question.
\begin{theorem}\label{thm-plaplace}
    Let $u\in C(Q_1)$ be a viscosity solution to \eqref{eq-plaplace} in $Q_1$ with $p \in (2, \infty)$. Then $u$ is weakly differentiable in time and $u_t \in L^{\infty}_{\mathrm{loc}}(Q_1)$ with the uniform estimate
\begin{equation*}
	\|u_t\|_{L^\infty(Q_{1/2})} \leq C,
\end{equation*}
where $C>0$ is a constant depending only on $n$, $p$ and $\|u\|_{L^{\infty}(Q_1)}$.
\end{theorem}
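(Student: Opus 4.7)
The plan is to establish the $L^\infty$ bound on $u_t$ by running a Bernstein-type maximum principle on an auxiliary quantity built from the time derivative, first in a classically solvable regularization of \eqref{eq-plaplace} and then passing to the limit.

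\textbf{Regularization and the linearized equation.} First I would replace \eqref{eq-plaplace} by its non-degenerate quasilinear approximation
\[
u^\varepsilon_t = \operatorname{div}\!\bigl((\varepsilon^2 + |Du^\varepsilon|^2)^{(p-2)/2}\, Du^\varepsilon\bigr)\quad\text{in }Q_1,
\]
with smooth boundary data on $\partial_p Q_1$ approximating $u$. Standard quasilinear parabolic theory yields a classical smooth solution $u^\varepsilon$; the known interior $C^{1,\alpha}$ estimates for parabolic $p$-Laplace equations are stable as $\varepsilon\to 0^+$ and provide a uniform bound on $\|Du^\varepsilon\|_{L^\infty(Q_{3/4})}$, while viscosity-solution stability will give $u^\varepsilon\to u$ locally uniformly at the end. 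Differentiating the regularized equation in $t$ then shows that $w:=u^\varepsilon_t$ solves a linear, non-uniformly parabolic equation
\[
w_t = a^\varepsilon_{ij}(Du^\varepsilon)\,D_{ij}w + b^\varepsilon_k(Du^\varepsilon, D^2 u^\varepsilon)\, D_k w,
\]
whose principal coefficient matrix $a^\varepsilon_{ij}(q) = (\varepsilon^2+|q|^2)^{(p-4)/2}\bigl((\varepsilon^2+|q|^2)\delta_{ij}+(p-2)q_iq_j\bigr)$ degenerates as $|q|\to 0$ in the unregularized limit, and whose drift $b^\varepsilon$ depends linearly on $D^2 u^\varepsilon$ and is \emph{a priori} not bounded uniformly in $\varepsilon$.

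\textbf{Bernstein argument.} I would apply the maximum principle to an auxiliary function of the form
\[
\Phi(x,t)=\eta(x,t)^{k}\,\bigl(w^{2}+\lambda\,G(|Du^\varepsilon|^{2}+\varepsilon^2)\bigr),
\]
where $\eta$ is a smooth cutoff localizing to $Q_{1/2}$, $k$ a large integer, $\lambda>0$ a parameter, and $G$ a suitable power-type function. Computing $\Phi_t - a^\varepsilon_{ij}D_{ij}\Phi$ at an interior maximum of $\Phi$ and substituting the equations for $w$ and for $|Du^\varepsilon|^2$, I expect the good diffusion term $a^\varepsilon_{ij}D_iw\,D_jw$ together with the positive contribution produced by applying $-a^\varepsilon_{ij}D_{ij}$ to $\lambda G(|Du^\varepsilon|^2+\varepsilon^2)$ to absorb, via weighted Cauchy--Schwarz, the sign-indefinite drift $2w\,b^\varepsilon_k\,D_k w$; this gives $\Phi \leq C$ at the maximum and hence on the support of $\eta$, yielding $|u^\varepsilon_t|\leq C(n,p,\|u\|_{L^\infty(Q_1)})$ on $Q_{1/2}$ uniformly in $\varepsilon$.

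\textbf{Passage to the limit and main obstacle.} With this uniform bound in hand, standard stability and weak compactness let me pass to the limit $\varepsilon\to 0^{+}$ to obtain the claimed weak differentiability of $u$ in time together with the $L^\infty$ estimate on $u_t$. The crux of the argument is the Bernstein step: the drift coefficient involves negative powers of $(\varepsilon^2 + |Du^\varepsilon|^2)^{1/2}$ together with $D^2 u^\varepsilon$, and the latter is itself uncontrolled as $\varepsilon\to 0$, so arranging the cancellation \emph{uniformly} in $\varepsilon$ requires a precise choice of the exponent in $G$ and of the coefficient $\lambda$, matched to the powers of $(\varepsilon^2 + |Du^\varepsilon|^2)$ appearing in $a^\varepsilon_{ij}$, so that the resulting differential inequality is genuinely parabolic. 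Everything else---the approximation, the derivation of the linearized equation, and the final passage to the limit---is relatively routine once this cancellation is in place.
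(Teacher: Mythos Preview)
Your overall strategy (regularize, run Bernstein on an auxiliary function mixing $u_t^2$ with a power of $\varepsilon^2+|Du|^2$, pass to the limit) is exactly the paper's, and you have correctly located the obstacle in the drift's dependence on $\|D^2u^\varepsilon\|$. But the absorption you sketch, with a \emph{fixed} coefficient $\lambda$, does not close; one specific device is still missing.

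First, it is cleaner to work with the full linearized operator $L=a^{ij}(Du)D_{ij}+a^{ij}_{q_l}(Du)\,u_{ij}\,D_l$, so that both $u_t$ and each $u_k$ satisfy $(\partial_t-L)w=0$ exactly; the term $2w\,b_k\,D_kw$ you single out then disappears identically. What survives in $v_t-Lv$ for $v=\eta^2 u_t^2+\lambda(\varepsilon^2+|Du|^2)^{(2-\beta)/2}$ is the interaction of the drift with the cutoff, and after trading one factor $|u_t|\le C(\varepsilon^2+|Du|^2)^{(p-2)/2}\|D^2u\|$ via the equation the worst positive term is of order
\[
C\,(\eta|u_t|)\,(\varepsilon^2+|Du|^2)^{(2p-5)/2}\,\|D^2u\|^2,
\]
against the good negative contribution $-c\,\lambda\,(\varepsilon^2+|Du|^2)^{(p-\beta-2)/2}\,\|D^2u\|^2$ from the gradient piece. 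Matching the exponents of $(\varepsilon^2+|Du|^2)$ is possible with $\beta=\max\{3-p,0\}$, but the residual factor $\eta|u_t|$ in the bad term forces $\lambda\gtrsim\sup(\eta|u_t|)$, which is circular. The paper's resolution is to \emph{build this into the auxiliary function}: set $A:=\|\eta u_t\|_{L^\infty(Q_1)}$, assume $A>1$ (otherwise there is nothing to prove), and take the coefficient of the gradient piece to be $\delta A$ rather than a constant. Then every term scales linearly in $A$, the factor cancels, and a large universal $\delta$ gives $v_t-Lv\le -c\,u_t^2$; at the maximum this forces $u_t=0$, whence $A^2\le v_{\max}\le 2\delta A$. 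This self-referential choice of coefficient is the ingredient you are missing; no universal $\lambda$ will do. The rest of your plan (approximation, stability, limit) is sound.
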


As a consequence of the uniform $L^{\infty}$-boundedness of $u_t$, we can understand the parabolic $p$-Laplace equations \eqref{eq-plaplace} as elliptic $p$-Laplace equations with nonhomogeneous term $f \coloneqq -u_t \in L^{\infty}(Q_{3/4})$. In other words, \Cref{thm-plaplace} suggests a useful reduction scheme for solutions $u$ to \eqref{eq-plaplace}, which captures a relation between the $C^{p'}$-regularity conjecture in the elliptic and parabolic setting.

To be more precise, we let $\bar\alpha = \bar\alpha (n, p) \in (0,1/(p-1)]$ be the largest number that guarantees solutions $u$ to 
\begin{equation*}
    -\Delta_p u= f \in L^{\infty}(B_1)
\end{equation*}
to be contained in  $C^{1,\bar\alpha}_{\mathrm{loc}}$. For instance, $\bar\alpha(2, p)=1/(p-1)$ by \cite{ATU17}. Then our second main theorem says that the exponent $\bar\alpha$ is also optimal for the parabolic $p$-Laplace equations in the following sense.

\begin{theorem}\label{thm-reduction-quasi}
    Let $u\in C(Q_1)$ be a viscosity solution to \eqref{eq-plaplace} in $Q_1$ with $p \in (2, \infty)$. Then there exists a constant $C>0$ depending only on $n$, $p$ and $\|u\|_{L^{\infty}(Q_1)}$ such that
\begin{equation} \label{op_c1a_PLE}
	|Du(x, t)-Du(y, s)| \leq C\left(|x-y|^{\bar\alpha}+|t-s|^{\frac{\bar\alpha}{1+\bar\alpha}}\right) \quad \text{for all $(x, t), (y,s) \in \overline{Q_{1/2}}$}
\end{equation}
and 
\begin{equation}\label{op_time_PLE}
    |u(x, t)-u(x, t)| \leq C|t-s| \quad \text{for all $(x, t), (x,s) \in \overline{Q_{1/2}}$}.
\end{equation}
\end{theorem}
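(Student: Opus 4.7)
The plan is to combine \Cref{thm-plaplace} with the definition of $\bar{\alpha}$ and a standard space-time interpolation. By a covering/rescaling argument applied to \Cref{thm-plaplace}, one obtains $\|u_t\|_{L^\infty(Q_{3/4})} \leq C$, from which the temporal Lipschitz estimate \eqref{op_time_PLE} follows immediately via the fundamental theorem of calculus applied to $t \mapsto u(x,t)$.

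Next, I would freeze the time variable and view $u(\cdot,t)$, for each admissible $t$, as a viscosity solution of the elliptic equation $-\Delta_p v = f_t$ in $B_{3/4}$, with source $f_t(x) \coloneqq -u_t(x,t)$ satisfying $\|f_t\|_{L^\infty(B_{3/4})} \leq C$ uniformly in $t$. By the very definition of $\bar{\alpha} = \bar{\alpha}(n,p)$, this would yield the uniform spatial estimate $\|u(\cdot,t)\|_{C^{1,\bar{\alpha}}(\overline{B_{1/2}})} \leq C$ for each $t$, with $C$ depending only on $n$, $p$, and $\|u\|_{L^\infty(Q_1)}$. The main obstacle is the rigorous justification of this elliptic freezing, since $u_t$ is only defined almost everywhere as an $L^\infty$ function and is not \emph{a priori} continuous in $x$. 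My plan is to handle this by approximation: regularize in time (e.g.\ by Steklov averaging) to obtain smoother approximate solutions with continuous right-hand sides, apply the elliptic $C^{1,\bar{\alpha}}$ estimate uniformly to these, and pass to the limit using the stability of viscosity solutions together with the uniform estimates.

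Finally, I would upgrade the slicewise spatial bound to the joint estimate \eqref{op_c1a_PLE} by interpolation. Fix $(x,t),(x,s) \in \overline{Q_{1/2}}$ and set $V \coloneqq Du(x,t)-Du(x,s)$. Combining the pointwise $C^{1,\bar{\alpha}}$ expansions of $u(\cdot,t)$ and $u(\cdot,s)$ at $x$ with \eqref{op_time_PLE} yields
\[ \bigl|V \cdot (y-x)\bigr| \leq 2C|t-s| + 2C|y-x|^{1+\bar{\alpha}} \]
for all admissible $y$. Choosing $y-x$ in the direction of $V$ with $|y-x| \sim |t-s|^{1/(1+\bar{\alpha})}$ and optimizing yields $|V| \leq C|t-s|^{\bar{\alpha}/(1+\bar{\alpha})}$ when $|t-s|$ is small, while for larger $|t-s|$ the bound is trivial from $Du \in L^\infty$. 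The triangle inequality
\[ |Du(x,t)-Du(y,s)| \leq |Du(x,t)-Du(x,s)| + |Du(x,s)-Du(y,s)| \]
combined with the uniform spatial $C^{\bar{\alpha}}$ bound on $Du$ then delivers \eqref{op_c1a_PLE}. The resulting exponent $\bar{\alpha}/(1+\bar{\alpha})$ is consistent with the sharp scaling relation $\mu = \alpha/(2 + \alpha(2-p))$ derived in the introduction when $\bar{\alpha} = 1/(p-1)$ and $\nu = 1$, which is why I expect the interpolation step to be tight once the elliptic reduction is in hand.
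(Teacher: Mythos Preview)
Your proposal is correct and follows essentially the same route as the paper: invoke \Cref{thm-plaplace} for the time-Lipschitz bound, freeze time to reduce to the elliptic $C^{1,\bar\alpha}$ estimate via the definition of $\bar\alpha$, then interpolate (the paper packages your interpolation argument as \Cref{lem-simple}). The paper does not explicitly address the $L^\infty$-source concern you raise---it simply asserts that $u$ is a viscosity solution of $-\Delta_p u = -u_t \in L^\infty$---so your extra care there is, if anything, an improvement in rigor.
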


We recall from Juutinen--Lindqvist--Manfredi \cite[Theorem 4.4]{JLM01} that a continuous function is a viscosity solution to \eqref{eq-plaplace} if and only if it is a weak solution to \eqref{eq-plaplace}. In particular, our main theorems \Cref{thm-plaplace} and \Cref{thm-reduction-quasi} also hold for weak solutions to the parabolic $p$-Laplace equations. Since we focus on the non-variational structure of \eqref{eq-plaplace} without energy-type argument, we adopt the concept of viscosity solutions instead of weak solutions. Moreover, our method works in a rather general non-divergence framework; see \Cref{sec:applications} for applications in different situations.

\subsection{Strategy of the proof}
The proof of time derivative estimates stated in \Cref{thm-plaplace} heavily relies on the Bernstein technique for degenerate operators in non-divergence form. The Bernstein technique plays an important role in establishing boundedness of certain derivatives (e.g., gradient, Hessian, or time derivative) of solutions to elliptic and parabolic equations. In particular, it makes use of the maximum principle for some auxiliary functions to obtain regularity estimates for the solutions. This idea, originated from Bernstein \cite{Ber06, Ber10}, has been widely used in various contexts. For instance, we refer to Caffarelli--Cabr\'e \cite{CC95} and Lee--Lee--Yun \cite{LLY24} for fully nonlinear equations, Wang \cite{Wan98} for mean curvature equations, Athanasopoulos--Caffarelli \cite{AC06} and Fern\'andez-Real--Jhaveri \cite{FRJ21} for thin obstacle problems and Cabr\'e--Dipierro--Valdinoci \cite{CDV22} and Ros-Oton--Torres-Latorre--Weidner \cite{ROTLW25} for nonlocal equations, respectively.

For uniformly parabolic equations with sufficiently smooth coefficients, a key observation of the Bernstein technique is that $u$, $D_k u$ and $u_t$ belong to the same solution class and so $u^2$, $|Du|^2$ and $u_t^2$ are subsolutions to a certain equation; see the books by Lieberman \cite[Lemma 3.18]{Lie96} for linear equations and Dong \cite[Theorem 6 in Chapter IX]{Don91} for fully nonlinear equations. The development of Bernstein technique becomes more challenging in the parabolic $p$-Laplace equations, since (i) the $p$-Laplacian operator $\Delta_p$ lacks a smooth structure and (ii) the first-order term in the equation satisfied by $D_k u$ and $u_t$ exhibit singularities and depend on the higher-order derivatives of $u$. In fact, $D_k u$ and $u_t$ are solutions to the following equation:
\begin{equation*}
    v_t - \Delta_p v = f(Dv, Du, D^2u) \approx |Du|^{p-3}\|D^2 u\| |Dv|.
\end{equation*}

To overcome these difficulties, (i) we consider the regularized $p$-Laplace equation and (ii) instead of using the auxiliary function $v=\eta^2 u_t^2 + \sigma |Du|^2$, which is valid for uniformly parabolic equations, we carefully refine the power of $|Du|$ and the constant $\sigma>0$. The key step of the proof is to compare the powers of singularity in each term and to choose the proper auxiliary function $v$ so that the term with a negative sign dominates all other terms. At this stage, we also convert the dependence of $f$ on $D^2u$ into the dependence on $u_t$ in order to fully exploit the degenerate nature of the equation.

\subsection{Historical background} \label{history}
In 1960s, the celebrated De Giorgi \cite{DeG57}--Nash \cite{Nas58}--Moser \cite{Mos60, Mos61} theory provided a positive answer to Hilbert's 19th problem by establishing the H\"older continuity of solutions to linear uniformly elliptic equations with measurable coefficients. Since the linearity of the operator does not play a crucial role in the theory, the H\"older estimate was extended to weak solutions to the quasilinear elliptic equations, including degenerate and singular elliptic $p$-Laplace equations as model cases, by Lady\v{z}enskaja--Ural'ceva \cite{LU68} and Serrin \cite{Ser64}. We also note that Moser \cite{Mos64} proved that weak solutions to linear uniformly parabolic equations are also locally H\"older continuous.

Nevertheless, the De Giorgi--Nash--Moser theory was not applicable to the degenerate and singular parabolic equations. Instead, DiBenedetto \cite{DiB86} and Chen--DiBenedetto \cite{CD88,CD92} introduced the concept of intrinsic scaling to develop the H\"older continuity of solutions to quasilinear degenerate and singular parabolic equations, respectively. Roughly speaking, the natural space-time scaling in the parabolic $p$-Laplace equations ($p \neq 2$) is determined by the solution itself and so the equation behaves like the heat equation in the view of its own intrinsic geometry. Moreover, the intrinsic scaling approach was employed to deduce the intrinsic Harnack inequality by DiBenedetto \cite{DiB88}, DiBenedetto--Gianazz--Vespri \cite{DGV08, DGV08b, DGV10} and Kuusi \cite{Kuu08} in various situations. We refer to the comprehensive monographs by DiBenedetto \cite{DiB93} and Urbano \cite{Urb08} for more details. 

On the other hand, there were several early works concerning the gradient estimates by Alikakos--Evans \cite{AE83}, the modulus of continuity of gradients by DiBenedetto--Friedman \cite{DF84} and the H\"older gradient estimates by DiBenedetto--Friedman \cite{DF85} and Wiegner \cite{Wie86} for the degenerate and singular parabolic equations. We also refer to the papers by Chen--DiBenedetto \cite{CD89} and Lieberman \cite{Lie93} for the boundary H\"older gradient estimates and to the one by Avelin--Kuusi-Nystr\"om \cite{AKN19} for the boundary Harnack inequality.

Let us next mention relatively recent papers on the regularity of solutions to a class of quasilinear parabolic equations. By utilizing the viscosity method, Jin--Silvestre \cite{JS17} obtained the H\"older gradient estimates of viscosity solutions to the normalized $p$-Laplace equations given by
\begin{equation*}
    u_t=\Delta_p^Nu\coloneqq |Du|^{p-2} \Delta_pu.
\end{equation*}
This approach was then extended to more general class of equations by Imbert--Jin--Silvestre \cite{IJS19} and Fang--Zhang \cite{FZ21, FZ23}. On the other hand, the $W^{2,2}$-regularity of weak solutions $u$ (in particular, $L^2$-regularity of $u_t$) was studied by Dong--Peng--Zhang--Zhou \cite{DPZZ20}; see also H\o{e}g--Lindqvist \cite{HL20} and Feng--Parviainen--Sarsa \cite{FPS23} for similar Sobolev-type regularity results. 

We would like to point out that in all the aforementioned works, solutions to the parabolic $p$-Laplace equations were shown to be $\alpha$-H\"older continuous in time for some $\alpha \in (0,1)$ that cannot be explicitly determined. The novelty of our result is that we establish the Lipschitz continuity of solutions $u$ in the time variable by directly analyzing the time derivative $u_t$. Furthermore, this Lipschitz regularity of $u_t$ implies that the optimal regularity problem in the parabolic setting can be reduced to the well-known $C^{p'}$-conjecture in the elliptic case.

\subsection{Applications}
Our approach used for the time derivative estimate of solutions to the $p$-Laplace equations has several significant applications. To begin with, we study the regularity of viscosity solutions to fully nonlinear degenerate parabolic equations
\begin{equation}\label{eq-fullynonlinear}
    u_t=|Du|^{\gamma}F(D^2u),
\end{equation}
where $\gamma > 0$ and $F$ satisfies \ref{F1}, \ref{F2} that are specified in \Cref{sec:preliminary}. By using the Bernstein technique combined with an another approximation scheme, we again establish the time derivative estimate for $u$; see \Cref{thm-ut-fullynonlinear} for the precise statement. As observed in \Cref{thm-reduction-quasi}, the $L^{\infty}$-boundedness of $u_t$ reveals a strong connection between the spatial regularity of solutions to the parabolic problem and the corresponding elliptic problem. In fact, we exploit the optimal $C^{1, \frac{1}{1+\gamma}}$-regularity developed for the elliptic fully nonlinear framework by Ara\'ujo--Ricarte--Teixeira \cite[Corollary~3.2]{ART15} to derive the following optimal regularity result.
\begin{theorem}\label{thm-fullynonlinear}
    Suppose that $F$ satisfies \textnormal{\ref{F1}} and \textnormal{\ref{F2}}. Let $u\in C(Q_1)$ be a viscosity solution to \eqref{eq-fullynonlinear} in $Q_1$ with  $\gamma >0$. Then there exists a constant $C>0$ depending only on $n$, $p$ and $\|u\|_{L^{\infty}(Q_1)}$ such that
\begin{equation} \label{op_c1a_FNE}
	|Du(x, t)-Du(y, s)| \leq C\left(|x-y|^{\frac{1}{1+\gamma}}+|t-s|^{\frac{1}{2+\gamma}}\right) \quad \text{for all $(x, t), (y,s) \in \overline{Q_{1/2}}$}
\end{equation}
and 
\begin{equation}\label{op_time_FNE}
    |u(x, t)-u(x, s)| \leq C|t-s| \quad \text{for all $(x, t), (x,s) \in \overline{Q_{1/2}}$}.
\end{equation}
\end{theorem}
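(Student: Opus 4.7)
The plan is to mirror the reduction scheme from \Cref{thm-reduction-quasi}, combining a time-derivative bound with an optimal elliptic $C^{1,1/(1+\gamma)}$ estimate to produce the joint parabolic regularity. The time-derivative estimate established as \Cref{thm-ut-fullynonlinear} already furnishes $\|u_t\|_{L^\infty(Q_{3/4})} \leq C$, which is exactly \eqref{op_time_FNE}. The two remaining estimates in \eqref{op_c1a_FNE}---the spatial $C^{1,1/(1+\gamma)}$ part and the time-Hölder modulus of $Du$---should be extracted from this $L^\infty$ bound together with the elliptic optimal regularity of Araújo--Ricarte--Teixeira \cite[Corollary~3.2]{ART15}.

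For the spatial part I would read \eqref{eq-fullynonlinear} slicewise: for each fixed $t_0 \in (-1/2, 0]$, the function $v(x) \coloneqq u(x, t_0)$ is a viscosity solution of the elliptic equation $|Dv|^{\gamma} F(D^2 v) = g(x)$ on $B_{3/4}$ with $g(x) \coloneqq u_t(x, t_0)$ and $\|g\|_{L^\infty} \leq C$. Applying \cite[Corollary~3.2]{ART15} then yields $[Du(\cdot, t_0)]_{C^{0, 1/(1+\gamma)}(B_{1/2})} \leq C$ with $C$ independent of $t_0$. To upgrade this to the joint estimate \eqref{op_c1a_FNE}, I would run a Campanato-type interpolation: fixing $x_0 \in B_{1/2}$ and $t \neq s$ in $(-1/2,0]$, write the first-order Taylor expansions of $u(\cdot, t)$ and $u(\cdot, s)$ at $x_0$, with remainders controlled by $Cr^{1+1/(1+\gamma)}$ on $B_r(x_0)$; subtract them and use \eqref{op_time_FNE} to bound $|u(y,t) - u(y,s)|$ by $C|t-s|$ at both $y = x_0$ and $y = x_0 + r\hat e$, where $\hat e$ is the unit direction of $Du(x_0,t) - Du(x_0,s)$. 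This yields
\[
    r\,|Du(x_0, t) - Du(x_0, s)| \leq Cr^{1 + 1/(1+\gamma)} + C|t-s|,
\]
and optimizing over $r>0$ produces the exponent $\alpha/(1+\alpha) = 1/(2+\gamma)$, finishing the time Hölder bound for $Du$.

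The principal difficulty is the slicewise elliptic reformulation: because $u_t$ is obtained only as an $L^\infty$ function (a weak time derivative), it is not immediate that $v = u(\cdot, t_0)$ is an elliptic viscosity solution with $L^\infty$ source at \emph{every} $t_0$. I would sidestep this by running the entire reduction at the level of the regularized equations already used to prove \Cref{thm-ut-fullynonlinear}, where $u_t$ is smooth and the elliptic reformulation is automatic, and then pass to the limit using the uniformity of the elliptic $C^{1,1/(1+\gamma)}$ estimate of \cite{ART15} and standard viscosity-solution stability. Alternatively, one may note that for every $t_0$ the function $v$ satisfies, in the viscosity sense, the two-sided extremal inequalities induced by the $L^\infty$ bound on $u_t$ and the uniform ellipticity of $F$, which suffice to trigger the ART estimate with constants depending only on $n$, $\gamma$, the ellipticity of $F$, and $\|u\|_{L^\infty(Q_1)}$. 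Once this slicewise reduction is justified, the rest of the proof is pure interpolation and is structurally identical to the one used for \Cref{thm-reduction-quasi}.
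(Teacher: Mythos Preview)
Your proposal is correct and follows essentially the same route as the paper: the paper invokes \Cref{cor-fullynonlinear} (rather than \Cref{thm-ut-fullynonlinear}, which concerns only the regularized solutions $u^\varepsilon$) to obtain \eqref{op_time_FNE}, then freezes $t$ and applies \Cref{thm:op_regularity} (i.e., \cite[Corollary~3.2]{ART15}) for the spatial $C^{1,1/(1+\gamma)}$ bound, and finally appeals to \Cref{lem-simple}, whose proof is exactly the Taylor-expansion interpolation you wrote out. Your remark about the slicewise viscosity interpretation when $u_t$ is only a weak derivative is a fair caveat that the paper does not explicitly address.
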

Let us make a few remarks on \Cref{thm-fullynonlinear}. First of all, if $\gamma=0$, then the celebrated Evans \cite{Eva82}--Krylov \cite{Kry82, Kry83} theory ensures a stronger interior regularity (namely, $C^{2, \alpha}$-regularity for some $\alpha \in (0,1)$) of viscosity solutions. Moreover, by considering a simple viscosity solution
\begin{equation*}
    u(x, t)=C(n,\gamma) t+|x|^{1+\frac{1}{1+\gamma}}, \quad
    C(n,\gamma) \coloneqq \left(1+\frac{1}{1+\gamma}\right)^{1+\gamma}\left(n-1+\frac{1}{1+\gamma}\right)
\end{equation*}
to \eqref{eq-fullynonlinear} with $F(M)=\text{tr}(M)$, we observe that the $C^{1, \frac{1}{1+\gamma}}$-regularity in the spatial variables is optimal indeed. Finally, this regularity result refines and extends the recent result \cite[Theorem~1.1]{LLY24} in several aspects:
\begin{enumerate}[label=(\roman*)]
    \item The $C^{1, \kappa}$-regularity assumption on $F$, which was supposed in \cite[Theorem~1.1]{LLY24}, is no longer required.

    \item \Cref{thm-ut-fullynonlinear} improves the interior regularity of viscosity solutions in both the spatial and time variables, compared to \cite[Theorem~1.1]{LLY24}.

    \item The regularity theory can be developed up to the boundary; see \Cref{subsec-bdry} for details.
\end{enumerate}

We next turn our attention to the boundary regularity of solutions to a class of degenerate parabolic equations. We essentially follow the proof of \Cref{thm-plaplace}, that is, we employ the Bernstein technique by considering some auxiliary functions that involve the solution, its derivatives and cut-off functions. The only minor difference lies in the choice of cut-off function, which does not have a crucial effect on the proof. Therefore, our interior regularity results for both quasilinear and fully nonlinear frameworks can be extended to the global regularity results in a unified manner.

Let us finally point out that our method is also available for solutions $u$ that solve general quasilinear equations 
\begin{equation}\label{eq:generalquasilinear}
		u_t=|Du|^{\gamma}\Delta_p^N u:=|Du|^{\gamma+2-p} \Delta_p u,
\end{equation}
where $\gamma>0$ and $p \in (1, \infty)$. In particular, if we choose $\gamma=p-2$, then \eqref{eq:generalquasilinear} corresponds to the parabolic $p$-Laplace equations \eqref{eq-plaplace}. We note that the condition $\gamma>0$ here is equivalent to the condition $p \in (2, \infty)$ in \Cref{thm-plaplace}. We refer to \Cref{thm-generalquasi} for the precise statement regarding the time derivative estimate in this situation.

\subsection{Overview}
The paper is organized as follows. In \Cref{sec:preliminary}, we introduce the necessary notations, definitions and preliminary results for a class of degenerate parabolic equations. \Cref{sec:plaplace} is devoted to the proof of the main theorems, \Cref{thm-plaplace} and \Cref{thm-reduction-quasi}, concerning the optimal regularity for parabolic $p$-Laplace equations. Finally, in \Cref{sec:applications}, we apply this approach to develop the optimal regularity up to the boundary for both fully nonlinear and quasilinear degenerate parabolic equations.

%
%
\section{Preliminaries}\label{sec:preliminary}
In this section, we summarize some basic notations, definitions and known regularity results that will be used throughout the paper. 
\begin{notation}
Let us display some basic notations as follows.
\begin{enumerate} [label=(\arabic*)]
    \item Sets: 
    \begin{itemize}
        \item $\mathcal{S}^n = \{ M \in  \mathbb{R}^{n\times n} \mid M=M^T\}$.
        \item $B_r(x_0)=\{x\in \mathbb{R}^{n} : |x-x_0|<r\}$ and $B_r=B_r(0)$.
        \item $Q_r(x_0,t_0)=B_r(x_0)\times (t_0-r^{2},t_0]$ and $Q_r=Q_r(0,0)$.
        \item $Q_r^+(x_0,t_0)=Q_r(x_0,t_0)\cap \{x_n>0\}$ and $Q_r^+=Q^+_r(0,0)$.
        \item $Q^\rho_r=B_r\times (-\rho^{2-p}r^2,0]$.
    \end{itemize}
    \item We denotes $\|M\|$ as the Frobenius norm of an $n\times n$ matrix $M$, i.e.,
\begin{equation*}
    \|M\| = \bigg(\sum_{i,j=1}^n |M_{ij}|^2\bigg)^{1/2} \quad \text{for} \quad 
    M=(M_{ij})_{1\leq i,j\leq n}.
\end{equation*}
    \item For constants $0<\lambda\leq \Lambda$, we define the \emph{Pucci's extremal operators} $\mathcal{M}_{\lambda, \Lambda}^{\pm} : \mathcal{S}^n \to \mathbb{R}$ by
    \begin{equation*}
		\mathcal{M}_{\lambda, \Lambda}^{+}(M) \coloneqq \sup_{\lambda I \leq A \leq \Lambda I} \text{tr} (AM) \quad \text{and} \quad
		\mathcal{M}^{-}_{\lambda, \Lambda}(M) \coloneqq \inf_{\lambda I \leq A \leq \Lambda I} \text{tr} (AM).
	\end{equation*}
\end{enumerate}
\end{notation}
We begin with the definition of viscosity solutions to $p$-Laplace equations and fully nonlinear equations, respectively. We refer to \cite{CC95, CIL92} for more details.
\begin{definition}[Viscosity solutions for $p$-Laplace equations]
    Let $p \in (2, \infty)$. A finite almost everywhere and lower semicontinuous function $u :Q_1 \to \mathbb{R} \cup \{+\infty\}$ is a \emph{viscosity supersolution} to \eqref{eq-plaplace} in $Q_1$, if whenever $(x_0, t_0) \in Q_1$ and $\varphi \in C^2(Q_1)$ are such that $u-\varphi$ attains a local minimum at $(x_0, t_0)$, then we have
    \begin{equation*}
        \varphi_t(x_0, t_0)- \Delta_p \varphi(x_0, t_0) \geq 0.
    \end{equation*}
    A finite almost everywhere and upper semicontinuous function $u :Q_1 \to \mathbb{R} \cup \{-\infty\}$ is a \emph{viscosity supersolution} to \eqref{eq-plaplace} in $Q_1$, if whenever $(x_0, t_0) \in Q_1$ and $\varphi \in C^2(Q_1)$ are such that $u-\varphi$ attains a local maximum at $(x_0, t_0)$, then we have
    \begin{equation*}
       \varphi_t(x_0, t_0)- \Delta_p \varphi(x_0, t_0) \leq 0.
    \end{equation*}
    A continuous function $u$ is called a \emph{viscosity solution} to \eqref{eq-plaplace} if and only if it is both viscosity supersolution and subsolution.
\end{definition}

\begin{definition} [Viscosity solutions for fully nonlinear equations]
    Let $\gamma > 0$.
    A finite almost everywhere and lower semicontinuous function $u :Q_1 \to \mathbb{R} \cup \{+\infty\}$ is a \emph{viscosity supersolution} to \eqref{eq-fullynonlinear} in $Q_1$, if whenever $(x_0, t_0) \in Q_1$ and $\varphi \in C^2(Q_1)$ are such that $u-\varphi$ attains a local minimum at $(x_0, t_0)$, then we have
    \begin{equation*}
         \varphi_t(x_0, t_0)-|D\varphi(x_0, t_0)|^{\gamma} F(D^2 \varphi(x_0, t_0)) \geq 0.
    \end{equation*}
    A finite almost everywhere and upper semicontinuous function $u :Q_1 \to \mathbb{R} \cup \{-\infty\}$ is a \emph{viscosity supersolution} to \eqref{eq-fullynonlinear} in $Q_1$, if whenever $(x_0, t_0) \in Q_1$ and $\varphi \in C^2(Q_1)$ are such that $u-\varphi$ attains a local maximum at $(x_0, t_0)$, then we have
    \begin{equation*}
        \varphi_t(x_0, t_0)-|D\varphi(x_0, t_0)|^{\gamma} F(D^2 \varphi(x_0, t_0) ) \leq 0.
    \end{equation*}
    A continuous function $u$ is called a \emph{viscosity solution} to \eqref{eq-fullynonlinear} if and only if it is both viscosity supersolution and subsolution.
\end{definition}

\begin{remark}
    In fact, for singular cases such as $p$-Laplace equations with $p \in (1,2)$ or fully nonlinear equations with $\gamma \in (-1, 0)$, the standard definition of viscosity solutions given in \cite{CC95, CIL92} is not appropriate. In particular, an additional consideration of the vanishing gradient case (i.e., $D\varphi(x_0, t_0)=0$) is required for singular equations; see \cite{FZ23, JLM01, KMP12} for the quasilinear context and \cite{LLY24, OS97} for the fully nonlinear context.
\end{remark}

We next collect some known existence and regularity results that we will need later on. 
\subsection{Regularity results for \texorpdfstring{$p$}{p}-Laplace equations}
To avoid the technical difficulties caused by the non-smoothness of viscosity solutions to \eqref{eq-plaplace} and the $p$-Laplacian operator $\Delta_p$ itself, we introduce the regularization of \eqref{eq-plaplace} as follows:
\begin{equation} \label{eq:regularized} 
   u^{\varepsilon}_t= (\varepsilon^2 +|Du^{\varepsilon}|^2)^{\frac{p-2}{2}}  \left(\delta_{ij} +(p-2) \frac{u^{\varepsilon}_i u^{\varepsilon}_j}{\varepsilon^2+|Du^{\varepsilon}|^2}\right) u^{\varepsilon}_{ij}.
\end{equation}
Then the local boundedness of $Du^{\varepsilon}$ can be obtained by applying Ishii--Lions method \cite[Theorem~III.1]{IL90}.
\begin{lemma}[Uniform Lipschitz estimates, {\cite[Corollary~2.4]{IJS19}}] \label{thm:IJS19}
	Let $u^\varepsilon \in C^\infty(Q_1)$ be a solution to \eqref{eq:regularized} in $Q_1$ with $\varepsilon \in (0,1)$ and $p \in (2, \infty)$. 
    Then 
\begin{equation*}
	|u^\varepsilon(x,t) - u^\varepsilon(y, t)| \leq C |x-y| \quad \text{for all }  (x,t), (y,t) \in Q_{3/4},
\end{equation*}
where $C>0$ is a constant depending only on $n$, $p$ and $\|u^{\varepsilon}\|_{L^\infty(Q_1)}$.
\end{lemma}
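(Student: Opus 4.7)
The plan is to follow the Ishii--Lions doubling-variable method used in [IJS19]. Fix $x_0, y_0 \in B_{3/4}$ with $x_0 \neq y_0$ and $t_0 \in (-9/16, 0]$, and introduce the auxiliary function
\begin{equation*}
\Phi(x, y, t) := u^\varepsilon(x, t) - u^\varepsilon(y, t) - L\,\omega(|x - y|) - \tfrac{A}{2}\bigl(|x - x_0|^2 + |y - y_0|^2 + (t - t_0)^2\bigr)
\end{equation*}
on $\overline{B_r(x_0)} \times \overline{B_r(y_0)} \times [t_0 - r^2, t_0]$, with $r$ small, $\omega$ a smooth strictly concave modulus satisfying $\omega(0) = 0$, $\omega'(0) = 1$, $\omega'' < 0$, and $A$ chosen sufficiently large (in terms of $n, p, \|u^\varepsilon\|_{L^\infty(Q_1)}$) so that the quadratic penalty makes $\Phi$ strictly negative on the parabolic boundary of its domain. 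It suffices to show $\sup \Phi \leq 0$ for some universal $L$: this yields $u^\varepsilon(x_0, t_0) - u^\varepsilon(y_0, t_0) \leq L\omega(|x_0 - y_0|) \leq L|x_0 - y_0|$ (using concavity of $\omega$), and symmetry in $(x_0, y_0)$ then delivers the Lipschitz estimate.

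\textbf{Using the equation at an interior maximum.} Suppose for contradiction $\sup \Phi > 0$. Since $\Phi(x, x, t) \leq 0$, the supremum is attained at an interior point $(\bar x, \bar y, \bar t)$ with $\bar x \neq \bar y$. Writing $r := |\bar x - \bar y|$ and $\hat q := (\bar x - \bar y)/r$, the first-order conditions give
\begin{equation*}
Du^\varepsilon(\bar x, \bar t) = L\omega'(r)\,\hat q + O(A), \qquad Du^\varepsilon(\bar y, \bar t) = L\omega'(r)\,\hat q + O(A),
\end{equation*}
so both gradients are of magnitude $\sim L$ and nearly parallel to $\hat q$. The second-order condition $D^2_{(x,y)} \Phi \leq 0$, tested against vectors of the form $(e, -e)$, yields for every unit $e \in \mathbb{R}^n$
\begin{equation*}
\langle (X - Y) e, e \rangle \leq 4L \bigl[\omega''(r)(e \cdot \hat q)^2 + \tfrac{\omega'(r)}{r}(1 - (e \cdot \hat q)^2)\bigr] + 4A,
\end{equation*}
where $X := D^2 u^\varepsilon(\bar x, \bar t)$ and $Y := D^2 u^\varepsilon(\bar y, \bar t)$. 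Subtracting the two copies of \eqref{eq:regularized} at $(\bar x, \bar t)$ and $(\bar y, \bar t)$ and using $\Phi_t = 0$ at the maximum, the time derivatives cancel up to $O(A)$, giving
\begin{equation*}
a^{ij}(Du^\varepsilon(\bar x, \bar t))\,X_{ij} - a^{ij}(Du^\varepsilon(\bar y, \bar t))\,Y_{ij} = O(A),
\end{equation*}
where $a^{ij}(q) = (\varepsilon^2 + |q|^2)^{(p-2)/2}\bigl(\delta_{ij} + (p-2)\,q_i q_j/(\varepsilon^2 + |q|^2)\bigr)$ has eigenvalue $(\varepsilon^2 + |q|^2)^{(p-2)/2}$ transverse to $q$ and $(p-1)$ times that along $q$.

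\textbf{Main obstacle and conclusion.} The principal difficulty is to balance the singular contributions: the favorable term in the $\hat q$-direction is $\sim 4(p-1)L^{p-1}\omega''(r) < 0$, while the transverse directions contribute $\sim 4(n-1)L^{p-1}\omega'(r)/r > 0$, and the mismatch $\bigl(a(Du^\varepsilon(\bar x, \bar t)) - a(Du^\varepsilon(\bar y, \bar t))\bigr) Y$ produces an $O(A\,L^{p-3}|Y|)$ correction, with $|Y|$ itself controlled by the second-order inequality. For a Lipschitz-type modulus the favorable and transverse terms are of comparable size in $r$, so $\omega$ must be tuned delicately; crucially, the localization forces $r \sim |x_0 - y_0|$ to remain bounded below, and on that scale $\omega$ can be chosen so that $(p-1)|\omega''(r)| > (n-1)\omega'(r)/r$ (this is where $p > 2$ is used). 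With this choice the right-hand side of the subtracted equation is $\leq -cL^{p-1}$, contradicting the $O(A)$ bound for $L$ larger than a threshold depending only on $n$, $p$, and $\|u^\varepsilon\|_{L^\infty(Q_1)}$, and thereby proving the lemma.
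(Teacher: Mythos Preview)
The paper does not give its own proof of this lemma; it is simply quoted from \cite[Corollary~2.4]{IJS19} as a preliminary fact, with only the remark that it follows from the Ishii--Lions method \cite{IL90}. So there is nothing in the paper to compare your argument against, and your overall strategy (doubling of variables with a concave modulus and quadratic localization) is indeed the right one. However, the sketch contains a genuine gap in the final paragraph.

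The problem is your treatment of the transverse directions. Using only the $(e,-e)$ test vector you obtain $\langle (X-Y)e,e\rangle \leq 4L\,\omega'(r)/r + O(A)$ for $e\perp\hat q$, which after multiplying by the coefficient produces the large positive term $\sim (n-1)L^{p-1}\omega'(r)/r$. You then claim this is beaten by the favorable term $(p-1)L^{p-1}|\omega''(r)|$ because ``the localization forces $r\sim|x_0-y_0|$ to remain bounded below.'' That claim is false: the maximizing pair $(\bar x,\bar y)$ bears no a~priori relation to $(x_0,y_0)$, and $r=|\bar x-\bar y|$ can be arbitrarily small. Moreover, for any concave $\omega$ with $\omega'(0)=1$ and $\omega'>0$ one necessarily has $|\omega''(r)|=o(1/r)$ as $r\to 0$ (otherwise $\omega'$ would become negative), so the inequality $(p-1)|\omega''(r)|>(n-1)\omega'(r)/r$ cannot hold near $r=0$ for any admissible modulus. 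The role you attribute to $p>2$ here is likewise spurious; the estimate in \cite{IJS19} holds for all $p>1$.

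What is missing is the second test vector: applying the second-order condition to $(e,e)$ gives $\langle (X-Y)e,e\rangle \leq 2A$ for \emph{every} unit $e$, independent of $r$. This replaces the bad transverse bound $4L\omega'(r)/r$ by $O(A)$, so the transverse contribution to $\mathrm{tr}\bigl(a(Du^\varepsilon(\bar x))(X-Y)\bigr)$ is only $O(L^{p-2}A)$, and the single negative term $\sim (p-1)L^{p-1}\omega''(r)$ from the $\hat q$-direction then yields the contradiction for $L$ large. You also need a genuine bound on $\|Y\|$ (not just $X-Y$) to control the coefficient-mismatch term; this comes from testing the matrix inequality with $(0,e)$ together with the equation, and is where the actual work in \cite{IJS19} lies.
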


The following lemma is required to obtain the family of solutions $\{u^\varepsilon\}_{\varepsilon \in (0,1)}$ to regularized Cauchy–Dirichlet problems is equicontinuous.
\begin{lemma}[Uniform global modulus of continuity, {\cite[Theorem~5.5]{IJS19}}]  \label{lem:mod_conti}
	Let $u^\varepsilon \in C^\infty(Q_1) \cap C(\overline{Q_1})$ be a solution to \eqref{eq:regularized} in $Q_1$
    with $\varepsilon \in (0,1)$ and $p \in (2, \infty)$. 
    Suppose that $u^{\varepsilon}= \varphi$ on $\partial_p Q_1$ and let $\omega$ be a modulus of continuity of $\varphi$. 
    Then there exists a modulus of continuity $\bar{\omega}$ depending only on $n$, $p$, $\omega$ and $\|\varphi\|_{L^\infty(\partial_p Q_1)}$ such that
\begin{equation*}
	|u^\varepsilon(x,t) - u^\varepsilon(y, s)| \leq \bar{\omega} \left(|x-y| + \sqrt{|t-s|}\right) \quad \text{for all }  (x,t), (y,s) \in \overline{Q_1}.
\end{equation*}
\end{lemma}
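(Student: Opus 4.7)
The plan is a classical boundary-to-interior argument: first establish a modulus of continuity at the parabolic boundary $\partial_p Q_1$ via explicit barriers, then propagate it to arbitrary pairs of points in $\overline{Q_1}$ by comparing $u^\varepsilon$ with its translates, exploiting the translation invariance of \eqref{eq:regularized}. Without loss of generality I may assume that $\omega$ is concave and bounded.

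\emph{Step 1 (boundary modulus).} For each $(x_0, t_0) \in \partial_p Q_1$, I would construct $w^\pm \in C^2(\overline{Q_1})$ satisfying $w^- \le \varphi \le w^+$ on $\partial_p Q_1$, $w^\pm(x_0, t_0) = \varphi(x_0, t_0)$, and such that $w^+$ (resp.\ $w^-$) is a classical super- (resp.\ sub-) solution of \eqref{eq:regularized}. For a lateral point $x_0 \in \partial B_1$, the exterior sphere condition motivates
\begin{equation*}
    w^+(x, t) = \varphi(x_0, t_0) + \omega^*(|x - z_0| - R) + K(t - t_0 + 1),
\end{equation*}
where $z_0$ is the center of an exterior ball of radius $R$ tangent to $\partial B_1$ at $x_0$, $\omega^*$ is a concave profile built from $\omega$ (essentially $\omega^*(s) \sim \omega(\sqrt{s})$), and $K$ is a large constant. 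Along $\partial B_1$ one has $|x - z_0| - R \sim c|x - x_0|^2$, so the ordering $w^+ \ge \varphi$ on the lateral boundary reduces to the matching condition $\omega^*(c\rho^2) \gtrsim \omega(\rho)$. A symmetric choice furnishes $w^-$; for the bottom $t_0 = -1$ a simpler barrier of the form $\varphi(x_0, -1) + \omega(|x - x_0|) + K(t + 1)$ works. Since \eqref{eq:regularized} is uniformly parabolic for each fixed $\varepsilon$, the comparison principle then yields
\begin{equation*}
    |u^\varepsilon(x, t) - \varphi(x_0, t_0)| \le \omega^*(|x - x_0| + \sqrt{|t - t_0|}),
\end{equation*}
with $\omega^*$ depending only on $n, p, \omega$ and $\|\varphi\|_{L^\infty(\partial_p Q_1)}$.

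\emph{Step 2 (propagation).} Given $(x_1, t_1), (x_2, t_2) \in \overline{Q_1}$, I set $(h, \tau) = (x_2 - x_1, t_2 - t_1)$ and compare $u^\varepsilon$ with $\tilde u(x, t) := u^\varepsilon(x + h, t + \tau)$ on the intersection cylinder $\Omega = Q_1 \cap (Q_1 - (h, \tau))$; both solve \eqref{eq:regularized}. At every $(z, r) \in \partial_p \Omega$, at least one of $(z, r)$ and $(z + h, r + \tau)$ belongs to $\partial_p Q_1$, so applying Step 1 at that boundary point bounds $|u^\varepsilon(z, r) - \tilde u(z, r)|$ by $\omega^*(|h| + \sqrt{|\tau|})$. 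The comparison principle on $\Omega$ transports this estimate to $(x_1, t_1)$, yielding the required global modulus $\bar\omega$.

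The main obstacle is the $\varepsilon$-uniformity in Step 1. Because \eqref{eq:regularized} is not in divergence form and its ellipticity degenerates as $|Du| \to 0$ when $p > 2$, the gradient of the barrier blows up at the contact point and so does the diffusion factor $(\varepsilon^2 + |Dw^\pm|^2)^{(p-2)/2}$; the supersolution inequality is saved by the concavity of $\omega^*$, whose negative Hessian term $|Dw^\pm|^{p-2}(p-1)\omega^{*\prime\prime}$ dominates the positive tangential contribution near the contact point independently of $\varepsilon$. Matching parabolic scaling forces $\omega^*(s) \sim \omega(\sqrt{s})$, which is precisely the source of the $\sqrt{|t - s|}$ factor in the final modulus $\bar\omega(|x - y| + \sqrt{|t - s|})$.
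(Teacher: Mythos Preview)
The paper does not supply its own proof of this lemma; it is quoted verbatim from \cite[Theorem~5.5]{IJS19} as a preliminary result. Your two-step barrier-plus-translation scheme is precisely the classical strategy used in that reference (and, more broadly, for uniformly parabolic quasilinear equations in non-divergence form), so there is no substantive divergence between your proposal and the cited argument.

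One small slip worth flagging: in your lateral barrier
\[
w^+(x,t)=\varphi(x_0,t_0)+\omega^*(|x-z_0|-R)+K(t-t_0+1),
\]
the last term equals $K$, not $0$, at $(x_0,t_0)$, which contradicts the touching condition $w^{\pm}(x_0,t_0)=\varphi(x_0,t_0)$ you state. The standard remedy is either to drop the linear-in-$t$ term altogether and arrange, via the strong concavity of $\omega^*$, that the purely spatial part already satisfies $a_\varepsilon^{ij}(Dw^+)w^+_{ij}\le 0$ (so $w^+_t=0$ suffices), or to replace it by a term of the form $K|t-t_0|$ and split the comparison into $t\le t_0$ and $t\ge t_0$. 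Either way the $\varepsilon$-uniformity you describe in your final paragraph goes through, and the overall argument is sound.
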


The next theorem follows from the regularity theory for classical quasilinear equations. 
\begin{theorem}[Solvability of Cauchy--Dirichlet problem, {\cite[Theorem~4.4]{LSU68}}]\label{thm-solvability-quasilinear}
    Let $\varphi\in C(\partial_p Q_1)$. Then, for any $\varepsilon>0$, the regularized Cauchy--Dirichlet problem 
    \begin{equation*}
    \left\{\begin{aligned}
        u^{\varepsilon}_t &= (\varepsilon^2 +|Du^{\varepsilon}|^2)^{\frac{p-2}{2}}  \left(\delta_{ij} +(p-2) \frac{u^{\varepsilon}_i u^{\varepsilon}_j}{\varepsilon^2+|Du^{\varepsilon}|^2}\right) u^{\varepsilon}_{ij} && \text{in } Q_1 \\
        u^{\varepsilon}&=\varphi && \text{on } \partial_p Q_1
    \end{aligned}\right.
    \end{equation*}
    is uniquely solvable in $C^{\infty}(Q_1)\cap C(\overline{Q_1})$.
\end{theorem}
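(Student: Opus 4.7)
The plan is to reduce the statement to the classical solvability theory for smooth quasilinear parabolic equations in non-divergence form. The essential point is that, for each fixed $\varepsilon>0$, the coefficient matrix
\[
a_{ij}(q) = (\varepsilon^2+|q|^2)^{\frac{p-2}{2}}\left(\delta_{ij}+(p-2)\frac{q_iq_j}{\varepsilon^2+|q|^2}\right)
\]
has eigenvalues bounded below by $\varepsilon^{p-2}$ and above by $(p-1)(\varepsilon^2+|q|^2)^{(p-2)/2}$, and depends analytically on $q$. Consequently, on every set where $|Du^{\varepsilon}|$ is a priori controlled, the regularized equation is strictly parabolic with smooth coefficients, so all structure conditions required by the classical theory of quasilinear parabolic equations in non-divergence form are satisfied.

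First I would approximate $\varphi$ by smooth boundary data $\varphi_k \to \varphi$ uniformly on $\partial_p Q_1$, arranging the usual compatibility at the corner $\partial B_1 \times \{-1\}$. For each $\varphi_k$ I would assemble the a priori estimates needed to invoke the Leray--Schauder continuity method: an $L^\infty$ bound from the maximum principle, a global Lipschitz bound via explicit linear barriers of the form $\varphi_k(x_0,t_0)\pm K|x-x_0|$ on the smooth part of $\partial_p Q_1$ combined with a Bernstein-type interior gradient maximum principle, and a global $C^{1+\alpha,(1+\alpha)/2}$ bound from DiBenedetto--Friedman-type estimates. These reduce the problem to a linear uniformly parabolic equation with H\"older coefficients, to which Schauder theory applies and produces a classical solution $u^{\varepsilon}_k \in C^{2+\alpha,1+\alpha/2}(\overline{Q_1})$; analyticity of the nonlinearity together with iterated interior Schauder bootstrapping then upgrade this to $u^{\varepsilon}_k \in C^\infty(Q_1)$.

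I would then pass to the limit $k\to\infty$: the interior $C^{1+\alpha}$ bounds are uniform in $k$, so Arzel\`a--Ascoli extracts a subsequence converging in $C^1_{\mathrm{loc}}$ to $u^{\varepsilon}\in C^\infty(Q_1)$, while a uniform global modulus of continuity built from Perron-type barriers (available since the regularized equation is strictly parabolic for $\varepsilon>0$) ensures $u^{\varepsilon}\in C(\overline{Q_1})$ with the correct boundary trace. Uniqueness is immediate from the comparison principle applied to the difference of two solutions, which solves a strictly parabolic linear equation. The hardest part will be the global Lipschitz bound up to $\partial_p Q_1$: without it, one cannot freeze the nonlinearity into a genuinely uniformly parabolic linear problem to which Schauder theory applies, and the ellipticity constants may blow up along the approximation. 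The presence of the corner $\partial B_1\times\{-1\}$ also obstructs a standard $C^{2,\alpha}$ boundary theory, but this is circumvented by requiring only $u^{\varepsilon}\in C(\overline{Q_1})$ at the boundary, which the barrier construction delivers without any higher-order compatibility.
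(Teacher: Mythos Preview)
The paper does not supply its own proof of this statement; it is quoted directly from the classical reference \cite[Theorem~4.4]{LSU68} and used as a black box. Your outline---a priori $L^\infty$ and gradient bounds feeding into a Leray--Schauder argument, followed by Schauder bootstrapping for interior smoothness and barrier constructions for continuity up to $\partial_p Q_1$---is precisely the strategy underlying that classical theory, so there is nothing to compare against in the paper itself.

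One small caution: your lower eigenvalue bound $\varepsilon^{p-2}$ tacitly uses $p>2$ (otherwise the roles of upper and lower bounds reverse), which is consistent with the paper's standing hypothesis but worth stating explicitly. Also, the barriers you propose, $\varphi_k(x_0,t_0)\pm K|x-x_0|$, are not $C^2$ at $x_0$ and so cannot be tested directly against the non-divergence operator; in practice one uses smooth radial barriers such as $\varphi_k(x_0,t_0)\pm A(1-|x-x_0+e|^{-\beta})$ for suitable $e$, $A$, $\beta$ (compare the paper's own \Cref{lem-bdry-Lip}). These are minor technical repairs and do not affect the overall soundness of your plan.
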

\subsection{Regularity results for fully nonlinear equations}
In \Cref{subsec:FN}, we will discuss the time derivative estimates and the optimal regularity of solutions to fully nonlinear degenerate parabolic equations. For this purpose, we present a fully nonlinear analogue of the results for the $p$-Laplace equations.

We first display two assumptions \ref{F1} and \ref{F2} on the fully nonlinear operator $F:\mathcal{S}^n \to \mathbb{R}$ that will be used throughout the paper.
\begin{enumerate} [label=\text{(F\arabic*)}]
\item \label{F1} $F(0)=0$ and $F$ is $(\lambda,\Lambda)$-\textit{uniformly elliptic}, that is,  there exist constants $0<\lambda \leq \Lambda$ such that for any $M,N \in \mathcal{S}^n$, we have 
	$$\mathcal{M}_{\lambda, \Lambda}^{-}(M-N) \leq F(M) - F(N) \leq \mathcal{M}_{\lambda, \Lambda}^{+}(M-N).$$ 
    
\item \label{F2} $F$ is convex or concave.
\end{enumerate}
On the other hand, the regularization of $F$ is possible through mollification and to do this, we extend the domain of $F$ from $\mathcal{S}^n$ to $\mathbb{R}^{n\times n}$ by considering $F(M)=F\left(\frac{M+M^T}{2}\right)$. Let $\psi \in C_c^{\infty}(\mathbb{R}^{n\times n})$ be a standard mollifier satisfying 
\begin{equation*}
    \int_{\mathbb{R}^{n^2}}\psi \,dM=1 \quad \text{and} \quad \mathrm{supp} \,\psi  \subset \{M \in \mathbb{R}^{n\times n} : \|M \| \leq 1\}.
\end{equation*}
Then we define $F^{\varepsilon}$ as
		\begin{align*}
			F^{\varepsilon}(M) \coloneqq F \ast \psi_{\varepsilon}(M)=\int_{\mathbb{R}^{n\times n}} F(M-N) \psi_{\varepsilon}(N)\,dN,
		\end{align*}
        where $\psi_{\varepsilon}(M)\coloneqq \varepsilon^{-n^2}\psi(M/\varepsilon)$ for $\varepsilon >0$. It is easy to check that $F^{\varepsilon}$ is $(\lambda,\Lambda)$-uniformly elliptic and smooth. Moreover, this mollification preserves the convexity or concavity of $F$. 
    Finally, $F^{\varepsilon}$ converges to $F$ uniformly since $F$ is Lipschitz continuous.

We now regularize the fully nonlinear equations \eqref{eq-fullynonlinear} as follows:
\begin{equation} \label{eq:regularized2} 
   u^{\varepsilon}_t= (\varepsilon^2 +|Du^{\varepsilon}|^2)^{\frac{\gamma}{2}} F^\varepsilon(D^2 u^{\varepsilon}).
\end{equation}
Then uniform local Lipschitz estimates, uniform global modulus of continuity and the solvability of Cauchy--Dirichlet problem are also available in the fully nonlinear setting.
\begin{lemma}[Uniform Lipschitz estimates, {\cite[Lemma~4.2]{LLY24}}] 
	Let $u^\varepsilon$ be a viscosity solution to \eqref{eq:regularized2} in $Q_1$ with $\varepsilon \in (0,1)$ and $\gamma>-2$. 
       Then 
\begin{equation*}
	|u^\varepsilon(x,t) - u^\varepsilon(y, t)| \leq C |x-y| \quad \text{for all }  (x,t), (y,t) \in Q_{3/4},
\end{equation*}
where $C>0$ is a constant depending only on $n$, $\lambda$, $\Lambda$, $\gamma$ and $\|u^{\varepsilon}\|_{L^\infty(Q_1)}$.
\end{lemma}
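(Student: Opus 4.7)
The plan is to apply the Ishii--Lions doubling-variables method, which is the standard route for Lipschitz estimates of viscosity solutions to non-divergence degenerate parabolic equations. Fix points $(x_0, t_0)$ and $(y_0, t_0)$ in $Q_{3/4}$ with $x_0 \neq y_0$. The goal is to produce $L > 0$, depending only on $n, \lambda, \Lambda, \gamma$ and $\|u^{\varepsilon}\|_{L^\infty(Q_1)}$, such that
\[
u^{\varepsilon}(x_0, t_0) - u^{\varepsilon}(y_0, t_0) \leq L\,|x_0 - y_0|.
\]
To this end, I would introduce the auxiliary function
\[
\Phi(x, y, t) = u^{\varepsilon}(x, t) - u^{\varepsilon}(y, t) - L\,\phi(|x - y|) - \tau\bigl(|x - x_0|^2 + |y - y_0|^2 + (t - t_0)^2\bigr)
\]
on $Q_1 \times Q_1 \times (-1, 0]$, where $\phi \in C^2([0, 1])$ is a strictly concave modulus of the form $\phi(r) = r - \omega_0 r^{1 + \beta}$ with small $\beta, \omega_0 > 0$, so that $\phi(0) = 0$, $\phi'(0^+) \leq 1$, and $\phi''(r) \leq -c_0 < 0$ on $[0, 1]$.

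I would argue by contradiction: if $\sup \Phi > 0$ for every large $L$, then the supremum is attained at an interior point $(\bar{x}, \bar{y}, \bar{t})$ with $\bar{x} \neq \bar{y}$, localized near $(x_0, y_0, t_0)$ by the penalty term. The parabolic Crandall--Ishii maximum principle then yields reals $a, b$ with $a - b = O(\tau)$ and matrices $X, Y \in \mathcal{S}^n$ forming second-order superjet and subjet values of $u^{\varepsilon}$ at $(\bar{x}, \bar{t})$ and $(\bar{y}, \bar{t})$, together with a matrix inequality whose right-hand side is essentially $D^2_{x, y}\bigl(L\phi(|x - y|)\bigr)$ at $(\bar{x}, \bar{y})$ plus $O(\tau)$ terms. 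The key structural fact is that strict concavity of $\phi$ forces a large negative eigenvalue of this Hessian in the direction $(\xi, -\xi)$ with $\xi = (\bar{x} - \bar{y})/|\bar{x} - \bar{y}|$, of size $\sim L\phi''(|\bar{x} - \bar{y}|) \leq -Lc_0$, which transfers to $\langle (X + Y)\xi, \xi \rangle \leq -Lc_0 + O(\tau)$.

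Substituting into the two viscosity inequalities for \eqref{eq:regularized2}, with effective gradient $p = L\phi'(|\bar{x} - \bar{y}|)\,\xi + O(\tau)$ (the same $p$ for both jets up to a $\tau$-error), and subtracting, I expect an estimate of the form
\[
a - b \;\leq\; (\varepsilon^2 + |p|^2)^{\gamma/2}\, \mathcal{M}^{+}_{\lambda, \Lambda}(X + Y) + C\tau.
\]
Since $|p|$ is comparable to $L$ and is bounded away from $0$ independently of $\varepsilon$, the factor $(\varepsilon^2 + |p|^2)^{\gamma/2}$ is comparable to $L^{\gamma}$. Combined with the $\xi$-direction eigenvalue estimate, this gives $\mathcal{M}^{+}_{\lambda, \Lambda}(X + Y) \leq -\lambda L c_0$, so the right-hand side is bounded by $-\lambda c_0\, L^{1 + \gamma} + C\tau$. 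Choosing first $L$ sufficiently large (depending only on the structural data) and then $\tau$ sufficiently small relative to $L$ contradicts $a - b = O(\tau)$, yielding the desired bound.

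The main obstacle is keeping all estimates uniform in $\varepsilon \in (0, 1)$ and handling the degenerate or singular factor $(\varepsilon^2 + |p|^2)^{\gamma/2}$ across the full range $\gamma > -2$. For $\gamma \geq 0$ the factor is bounded and the argument essentially reduces to the uniformly elliptic case. For $-2 < \gamma < 0$ the factor could blow up as $p \to 0$, but the doubling construction forces $|p| \gtrsim L$, so the potential singularity is avoided uniformly in $\varepsilon$; the balance $L^{1 + \gamma}$ versus $\tau$-errors must then be carried out separately in the subrange $\gamma \in (-2, -1]$, where $L^{1+\gamma}$ does not grow with $L$ but can still be made dominant over the bounded quantity $a - b$ by choosing $\tau$ small. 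A secondary technical point is the correct order of sending $\tau \to 0$ and the Crandall--Ishii slack parameter $\iota \to 0$, which must be controlled so that the $O(\tau)$ errors never interfere with the ellipticity gain.
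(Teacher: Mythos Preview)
The paper does not prove this lemma; it is quoted verbatim from \cite[Lemma~4.2]{LLY24}, and for the analogous $p$-Laplace statement the paper explicitly points to the Ishii--Lions method \cite[Theorem~III.1]{IL90}. So your doubling-of-variables outline is exactly the intended route, and at the level of strategy there is nothing to compare.

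That said, there is a consistent sign slip in your sketch. With the standard convention that $(a,p_x,X)\in\overline{\mathcal P}^{2,+}u^\varepsilon(\bar x,\bar t)$ and $(b,p_y,Y)\in\overline{\mathcal P}^{2,-}u^\varepsilon(\bar y,\bar t)$, the parabolic maximum principle yields
\[
\begin{pmatrix} X & 0 \\ 0 & -Y \end{pmatrix} \le A + \iota A^2,
\]
so testing against $(\xi,-\xi)$ gives $\langle (X-Y)\xi,\xi\rangle \le 4L\phi''(|\bar x-\bar y|)+O(\tau,\iota)$, and after subtraction the relevant quantity is $\mathcal M^+_{\lambda,\Lambda}(X-Y)$, not $\mathcal M^+_{\lambda,\Lambda}(X+Y)$. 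This is purely notational, but as written your inequalities do not match.

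Your discussion of the range $\gamma\in(-2,-1]$ is the genuinely delicate point, and the sketch does not quite close it. The issue is not only that $L^{1+\gamma}$ fails to grow with $L$; one must also control the cross-errors coming from $p_x\neq p_y$ and from $\|X\|,\|Y\|$ (which scale with $L$ and with $\iota^{-1}$), and these errors carry the same degenerate prefactor $(\varepsilon^2+|p|^2)^{\gamma/2}$. In the cited proof this is handled by a careful choice of $\phi$ (so that $\phi''$ beats $\phi'/r$ by a fixed margin), by exploiting that the transverse eigenvalues of $X-Y$ are $O(\tau+\iota)$ rather than $O(L)$, and by sending $\iota\to0$ before choosing $\tau$ and $L$. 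Your final paragraph gestures at this but stops short of the actual balance; if you write the argument out, make the order of limits explicit.
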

\begin{lemma}[Uniform global modulus of continuity, {\cite[Lemma~4.11]{LLY24}}]  
	Let $u^\varepsilon \in C^\infty(Q_1) \cap C(\overline{Q_1})$ be a solution to \eqref{eq:regularized2} in $Q_1$
    with $\varepsilon \in (0,1)$ and $\gamma>-1$. Suppose that $u^{\varepsilon}= \varphi$ on $\partial_p Q_1$ and let $\omega$ be a modulus of continuity of $\varphi$. 
    Then there exists a modulus of continuity $\bar{\omega}$ depending only on $n$, $\lambda$, $\Lambda$, $\gamma$, $\omega$ and $\|\varphi\|_{L^\infty(\partial_p Q_1)}$ such that
\begin{equation*}
	|u^\varepsilon(x,t) - u^\varepsilon(y, s)| \leq \bar{\omega} \left(|x-y| + \sqrt{|t-s|}\right) \quad \text{for all }  (x,t), (y,s) \in \overline{Q_1}.
\end{equation*}
\end{lemma}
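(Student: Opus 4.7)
The plan is to combine a boundary barrier argument with the interior Lipschitz estimate from the preceding lemma. Since $F^\varepsilon$ is smooth and $(\lambda,\Lambda)$-uniformly elliptic, and the prefactor $(\varepsilon^2+|Du^\varepsilon|^2)^{\gamma/2}$ is a smooth function of $Du^\varepsilon$, equation \eqref{eq:regularized2} is a classical quasilinear parabolic equation and admits the standard comparison principle. The strategy is to obtain a modulus of continuity at each point of $\partial_p Q_1$ via explicit sub/supersolution barriers, uniform in $\varepsilon\in(0,1)$, and then propagate this inwards.

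For each $(x_0,t_0)\in\partial_p Q_1$ and small $\rho>0$, I would construct barriers $\psi^\pm_\rho$ satisfying $\psi^-_\rho\leq\varphi\leq\psi^+_\rho$ on $\partial_p Q_1$ with $\psi^\pm_\rho(x_0,t_0)=\varphi(x_0,t_0)\pm\omega(\rho)$, such that $\psi^+_\rho$ is a classical supersolution (resp.\ $\psi^-_\rho$ a subsolution) of \eqref{eq:regularized2} uniformly in $\varepsilon$. A natural candidate is
\begin{equation*}
    \psi^+_\rho(x,t)=\varphi(x_0,t_0)+\omega(\rho)+K_1\frac{|x-x_0|^2}{\rho^2}+K_2\frac{t-t_0}{\rho^2},
\end{equation*}
supplemented by a cutoff guaranteeing $\psi^+_\rho\geq\varphi$ away from $(x_0,t_0)$. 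The supersolution inequality reduces to $K_2/\rho^2\geq(\varepsilon^2+|D\psi^+_\rho|^2)^{\gamma/2}F^\varepsilon(D^2\psi^+_\rho)$, and since $|D\psi^+_\rho|\leq 2K_1/\rho$ and $F^\varepsilon(D^2\psi^+_\rho)\leq 2n\Lambda K_1/\rho^2$, this can be enforced by choosing $K_2$ sufficiently large relative to $K_1$, uniformly in $\varepsilon\in(0,1)$. The comparison principle then yields $\psi^-_\rho\leq u^\varepsilon\leq\psi^+_\rho$, and optimizing in $\rho$ gives a boundary modulus $\tilde\omega$ depending only on $n,\lambda,\Lambda,\gamma,\omega,\|\varphi\|_{L^\infty(\partial_p Q_1)}$.

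To pass from the boundary modulus to the global one, I would apply a two-point dichotomy: given $(x,t),(y,s)\in\overline{Q_1}$ with parabolic separation $r=|x-y|+\sqrt{|t-s|}$, either both points lie at parabolic distance at least $\sqrt{r}$ from $\partial_p Q_1$, in which case the uniform interior Lipschitz estimate of the preceding lemma (applied on a rescaled sub-cylinder) controls $|u^\varepsilon(x,t)-u^\varepsilon(y,s)|$, or else at least one point lies within $\sqrt{r}$ of $\partial_p Q_1$, in which case the boundary modulus $\tilde\omega$ combined with a short detour through the nearest boundary point finishes the estimate. The main obstacle is verifying the supersolution property uniformly in $\varepsilon$ when $\gamma\in(-1,0)$: in this regime $(\varepsilon^2+|D\psi^+_\rho|^2)^{\gamma/2}$ blows up where $D\psi^+_\rho$ vanishes, so a refined barrier (for instance shifted so that $|D\psi^+_\rho|$ stays bounded away from zero on the region of use, or augmented by a linear slope in a preferred direction depending on whether $(x_0,t_0)$ lies on the lateral boundary or on the initial slice) is needed; this refinement is precisely what makes the hypothesis $\gamma>-1$ natural.
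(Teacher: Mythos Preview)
The paper does not give its own proof of this lemma; it is quoted as a preliminary result from \cite[Lemma~4.11]{LLY24}, so there is no in-paper argument against which to compare. Your outline---boundary barriers plus interior regularity glued by a near/far dichotomy---is the standard route and is how such statements are typically established.

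There is, however, a genuine gap in the interior step. The ``preceding lemma'' you invoke controls only $|u^\varepsilon(x,t)-u^\varepsilon(y,t)|$ at equal times, so by itself it says nothing about $|u^\varepsilon(x,t)-u^\varepsilon(y,s)|$ when $t\neq s$. Your dichotomy therefore needs an additional interior estimate in the time variable. This is obtainable---for instance, once $|Du^\varepsilon|\leq C$ on a subcylinder one can compare $u^\varepsilon$ against $u^\varepsilon(x_0,t_0)\pm C_1(|x-x_0|^2+C_2(t-t_0))$ on a small cylinder around $(x_0,t_0)$, using the spatial Lipschitz bound on the lateral boundary, to extract $C^{1/2}_t$ regularity; alternatively one may quote the full space-time interior H\"older estimate for the uniformly parabolic regularized equation---but it is a separate ingredient and must be supplied. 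A secondary point: in your barrier, $|D\psi^+_\rho|=2K_1|x-x_0|/\rho^2$, which on $Q_1$ is bounded by $4K_1/\rho^2$ rather than $2K_1/\rho$; consequently the admissible $K_2$ depends on $\rho$ (growing as $\rho\to 0$ when $\gamma>0$). This still produces a modulus after optimizing in $\rho$, but the constants do not decouple quite as cleanly as you wrote.
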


\begin{theorem}[Solvability of Cauchy--Dirichlet problem, {\cite[Theorem~4.12]{LLY24}}] 
	Let $\varphi\in C(\overline{Q_1})$. Then, for any $\varepsilon>0$, the regularized Cauchy--Dirichlet problem 
    \begin{equation*}  
		\left\{
		\begin{aligned}
			u^{\varepsilon}_t &= (\varepsilon^2+|Du^{\varepsilon}|^2 )^{\frac{\gamma}{2}}F^{\varepsilon}(D^2u^{\varepsilon}) && \text{in $Q_1$}\\
			u^{\varepsilon}&=\varphi && \text{on $\partial_p Q_1$}
		\end{aligned}\right.
    \end{equation*} 
is uniquely solvable in $C^{\infty}(Q_1)\cap C(\overline{Q_1})$.
\end{theorem}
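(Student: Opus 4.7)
The plan is to combine the continuity method with approximation of the boundary data, following the classical treatment of uniformly parabolic fully nonlinear equations. For fixed $\varepsilon>0$, the regularized operator $G^\varepsilon(\xi,M)\coloneqq(\varepsilon^2+|\xi|^2)^{\gamma/2}F^\varepsilon(M)$ is smooth in $(\xi,M)$, and its derivative in $M$ satisfies
\begin{equation*}
(\varepsilon^2+|\xi|^2)^{\gamma/2}\lambda I \leq \partial_M G^\varepsilon(\xi,M) \leq (\varepsilon^2+|\xi|^2)^{\gamma/2}\Lambda I,
\end{equation*}
so the equation is uniformly parabolic on every set where $|\xi|$ is bounded. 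Moreover $F^\varepsilon$ inherits convexity or concavity from \textnormal{\ref{F2}}. This places us squarely in the setting to which smooth quasilinear/fully nonlinear parabolic existence theory applies.

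First I would reduce to smooth, compatible boundary data: approximate $\varphi\in C(\overline{Q_1})$ uniformly by a sequence $\varphi_k\in C^\infty(\overline{Q_1})$ chosen to satisfy the compatibility conditions at the parabolic corner $\partial B_1\times\{-1\}$ required for global $C^{2,\alpha}$ regularity (for example, by extending $\varphi$ smoothly to a slightly larger cylinder and taking $\varphi_k$ to be time-independent near $t=-1$). Then, for each such $\varphi_k$, I would produce a classical solution $u_k^\varepsilon\in C^\infty(Q_1)\cap C^{2,\alpha}(\overline{Q_1})$ via the continuity method applied to
\begin{equation*}
u_t = \tau\, G^\varepsilon(Du,D^2u) + (1-\tau)\Delta u \quad \text{in } Q_1, \qquad u|_{\partial_p Q_1}=\varphi_k, \qquad \tau\in[0,1].
\end{equation*}
At $\tau=0$ this is the classical heat equation. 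Openness of the set of solvable $\tau$ follows from the implicit function theorem in $C^{2,\alpha}(\overline{Q_1})$, whose linearization is uniformly parabolic with H\"older coefficients. Closedness reduces to a priori estimates independent of $\tau$: $L^\infty$ via the maximum principle; $\|Du\|_{L^\infty}$ via the already established Lipschitz estimate; interior $C^{1,\alpha}$ via Krylov–Safonov; interior $C^{2,\alpha}$ via parabolic Evans–Krylov (using convexity or concavity of $F^\varepsilon$); and global $C^{2,\alpha}$ via its boundary version under the compatibility of $\varphi_k$. Differentiating the equation and iterating parabolic Schauder estimates then upgrades the solution to $C^\infty(Q_1)$.

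Next I would pass to the limit $\varphi_k\to\varphi$. Since the linearization at each $u_k^\varepsilon$ is uniformly parabolic, the comparison principle yields the stability bound
\begin{equation*}
\|u_k^\varepsilon-u_\ell^\varepsilon\|_{L^\infty(\overline{Q_1})} \leq \|\varphi_k-\varphi_\ell\|_{L^\infty(\partial_p Q_1)},
\end{equation*}
so $\{u_k^\varepsilon\}$ is Cauchy in $C(\overline{Q_1})$ and converges to some $u^\varepsilon\in C(\overline{Q_1})$ with the correct boundary trace. The interior a priori estimates are uniform in $k$, hence $u^\varepsilon\in C^\infty(Q_1)$ and solves the equation classically there. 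Uniqueness follows again from the comparison principle for the uniformly parabolic regularized equation.

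The main obstacle is the closedness step of the continuity method, specifically the global $C^{2,\alpha}$ estimate up to $\partial_p Q_1$: the boundary Evans–Krylov theorem requires both sufficient smoothness of the boundary data and compatibility at the parabolic corner, without which second derivatives can blow up. I would bypass this by choosing the approximating data $\varphi_k$ as described (smooth extension, constant in time near $t=-1$), which automatically enforces all corner compatibility conditions and lets the classical boundary theory be invoked without modification. All other ingredients—openness, interior regularity bootstrap, and the final limit passage—are standard once this step is in place.
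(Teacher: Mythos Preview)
The paper does not supply its own proof of this statement: it is quoted verbatim as \cite[Theorem~4.12]{LLY24} and used as a black box. So there is no in-paper argument to compare your outline against; your sketch is the standard route (continuity method plus approximation of boundary data) and is essentially what underlies the cited reference.

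That said, one step in your closedness argument is underspecified. You write ``$\|Du\|_{L^\infty}$ via the already established Lipschitz estimate,'' but the Lipschitz lemma recorded in the paper (from \cite[Lemma~4.2]{LLY24}) is an \emph{interior} estimate on $Q_{3/4}$, not a global one on $\overline{Q_1}$. For the continuity method you need a gradient bound up to the parabolic boundary, uniformly in $\tau$, since otherwise the ellipticity constant $(\varepsilon^2+|Du|^2)^{\gamma/2}\Lambda$ is not controlled near $\partial_p Q_1$ and the boundary Evans--Krylov step cannot be invoked with uniform constants. With smooth compatible data $\varphi_k$ this is repairable by a barrier argument of exactly the type the paper later carries out in \Cref{lem-bdry-Lip} (there for a flat piece; here one uses the smoothness of $\partial B_1$), combined with the interior estimate. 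You should make that boundary gradient step explicit. A second, smaller point: the interior Lipschitz lemma is stated for the equation at $\tau=1$, not for the deformed family; the Ishii--Lions argument behind it is robust enough to cover the whole family $\tau\in[0,1]$, but you should say so rather than cite the lemma as is.
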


\begin{remark}
If we consider $F^\varepsilon$ instead of $F$ in {\cite[Lemma~3.6 and Lemma~3.7]{LLY24}}, where we employed the Bernstein technique in terms of the difference quotient, then we can drop the condition $F\in C^{1,\kappa} $ in {\cite[Theorem 1.2 and Theorem 4.12]{LLY24}}. Indeed, the estimates in {\cite[Lemma 3.6 and Lemma 3.7]{LLY24}} are independent of the norm $\|F\|_{C^{1,\kappa}}$; that is, we exploit the qualitative (smooth) property of operators, but not the quantitative one. It is noteworthy that the $C^{1, \kappa}$-regularity condition on $F$ is only required for {\cite[Theorem~4.9]{LLY24}} to attain the regularity of small perturbation solutions developed in \cite{Wan13}. 
\end{remark}

For the later use, we also recall the optimal regularity result for viscosity solutions to fully nonlinear degenerate elliptic equations. 
\begin{theorem}[Optimal interior regularity, {\cite[Corollary~3.2]{ART15}}]\label{thm:op_regularity}
    Let $f\in L^\infty(B_1)$ and let $u$ be a viscosity solution to 
    \begin{equation*}
        |Du|^\gamma F(D^2 u) = f \quad \text{in } B_1.
    \end{equation*}
    Then
    \begin{equation*}
        \|u\|_{C^{1,\frac{1}{1+\gamma}}(\overline{B_{1/2}})} \leq C\|u\|_{L^\infty(B_1)},
    \end{equation*}
    where $C>0$ is a constant depending only on $n$, $\lambda$, $\Lambda$, $\gamma$ and $\|f\|_{L^\infty(B_1)}$.  Moreover, this regularity is optimal.
\end{theorem}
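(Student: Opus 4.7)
The plan is to establish a pointwise $C^{1,1/(1+\gamma)}$ estimate at an arbitrary interior point (by translation it suffices to treat $x_0=0$) via an improvement-of-flatness iteration in the spirit of Caffarelli's compactness method, then assemble pointwise estimates into a norm bound on $\overline{B_{1/2}}$. The natural scaling is dictated by the equation itself: if $v(x)=\lambda^{-1}u(rx)$ and $F$ is uniformly elliptic with $F(0)=0$, the Pucci bounds show that the rescaled equation has structural form $|Dv|^\gamma F_r(D^2v)=\tilde f(x)$ with right-hand side of size $r^{2+\gamma}\lambda^{-(1+\gamma)}\|f\|_{L^\infty}$, so the unique choice $\lambda=r^{1+1/(1+\gamma)}$ keeps the forcing term bounded. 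This identifies $\mu:=1/(1+\gamma)$ as the exponent governing an oscillation decay of the form $\sup_{B_r}|u-L_r|\le Cr^{1+\mu}$ for an approximating family of affine tangents $L_r$.

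The argument rests on two ingredients. The first is an approximation lemma: for every $\sigma>0$ there exists $\varepsilon_0>0$ such that any normalised viscosity solution $u$ with $\|u\|_{L^\infty(B_1)}\le 1$ and $\|f\|_{L^\infty(B_1)}\le\varepsilon_0$ satisfies $\|u-h\|_{L^\infty(B_{3/4})}\le\sigma$ for some viscosity solution $h$ of the homogeneous equation $|Dh|^\gamma F(D^2h)=0$. This follows by contradiction: a sequence of alleged counterexamples $u_k$ with forcing $f_k\to 0$ enjoys uniform interior H\"older and local Lipschitz bounds of the Ishii--Lions / Imbert--Silvestre type available for this degenerate operator with bounded right-hand side, so after passing to a subsequence $u_k\to u_\infty$ locally uniformly and stability of viscosity solutions identifies $u_\infty$ as the required $h$. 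The second ingredient is the regularity of the homogeneous limit: on the open set $\{|Dh|>0\}$ the equation reduces to the uniformly elliptic problem $F(D^2h)=0$, for which Krylov--Safonov--Caffarelli theory supplies a universal interior exponent $\alpha_0>\mu$; a careful analysis near the critical set (exploiting that $|Dh|^\gamma$ degenerates precisely in a controlled way) transfers this into an affine approximation of $h$ at some scale $\rho\in(0,1/2)$ with error $\rho^{1+\alpha_0}$ and coefficients bounded by a universal constant.

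Combining the two ingredients one obtains the improvement-of-flatness step: there exist $\rho\in(0,1/2)$ and $K>0$ such that if $\|u\|_{L^\infty(B_1)}\le 1$ and $\|f\|_{L^\infty(B_1)}\le\varepsilon_0$, then an affine function $L$ with $|L(0)|+|\nabla L|\le K$ satisfies $\sup_{B_\rho}|u-L|\le\rho^{1+\mu}$. The rescaled $\tilde u(x)=\rho^{-(1+\mu)}(u(\rho x)-L(\rho x))$ solves an equation of the same structural form with smaller forcing, so the step iterates to produce a Cauchy sequence $\{L_k\}$ with $\sup_{B_{\rho^k}}|u-L_k|\le\rho^{k(1+\mu)}$ and geometrically decaying gradient increments; passing to the limit yields a tangent plane at $0$ with the claimed quantitative modulus, and translation invariance delivers the desired norm on $\overline{B_{1/2}}$. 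The main obstacle is the degenerate branch of the iteration, where $|\nabla L_k|$ approaches zero at the critical rate $\rho^{k\mu}$ so that the rescaled gradient can vanish and $|Du|^\gamma$ cannot be absorbed as a harmless non-degenerate coefficient; resolving this requires exploiting that $\mu=1/(1+\gamma)$ is exactly the scale that balances the degeneracy of the principal part against the rescaled forcing. Optimality of the exponent is then witnessed by $u(x)=|x|^{1+1/(1+\gamma)}$, a radial classical solution of $|Du|^\gamma\Delta u=c_{n,\gamma}$ which saturates the H\"older modulus.
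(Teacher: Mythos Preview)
The paper does not prove this statement; it is quoted in the preliminaries as \cite[Corollary~3.2]{ART15} and used as a black box. Your proposal is an outline of the argument in that reference, and the overall architecture (compactness-based approximation by homogeneous solutions, improvement of flatness at the critical exponent $\mu=1/(1+\gamma)$, iteration, optimality via the radial example) is correct.

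There is, however, a genuine gap in your iteration step. After subtracting the affine function $L$ and rescaling, the new function $\tilde u$ does \emph{not} solve an equation ``of the same structural form'': since $D(u-L)=Du-\nabla L$, one obtains
\[
|D\tilde u + q|^{\gamma}\,\tilde F(D^2\tilde u)=\tilde f,
\]
with a drift vector $q$ proportional to the accumulated gradient $\nabla L_k$. The iteration must therefore be run from the outset in the enlarged class $|Du+q|^{\gamma}F(D^2u)=f$, and both the approximation lemma and the $C^{1,\alpha_0}$ regularity of the homogeneous limit must be established \emph{uniformly in bounded $q$}. The actual mechanism in \cite{ART15} is a dichotomy you only gesture at: either the drifts $q_k$ remain bounded along the iteration, in which case the compactness/flatness step applies uniformly and the critical scaling $\mu=1/(1+\gamma)$ gives the decay; or $|q_k|\to\infty$, in which case the weight $|D\tilde u+q_k|^{\gamma}$ is bounded below and the problem becomes uniformly elliptic with vanishing right-hand side, so Caffarelli's $C^{1,\alpha_0}$ theory yields an even better exponent from that scale onward. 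Your sentence about the ``degenerate branch'' names the difficulty but does not supply this resolution.
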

We note that the corresponding optimal boundary regularity was recently obtained in \cite{AS23}.

%
%
\section{Parabolic \texorpdfstring{$p$}{p}-Laplace equations}\label{sec:plaplace}
We develop an interior $L^{\infty}$-estimate of the time derivative $u_t$, where $u \in C(Q_1)$ is a viscosity solution to the parabolic $p$-Laplace equations \eqref{eq-plaplace} in $Q_1$ with $p \in (2, \infty)$. The case of $p=2$ corresponds to the standard heat equation (in particular, the equation is no longer degenerate) that immediately yields the smoothness of $u$ in $Q_1$. 

The main strategy is to employ the Bernstein technique for approximating smooth solutions $u^{\varepsilon}$ that solves the regularized $p$-Laplace equation \eqref{eq:regularized}. If we set
\begin{equation*}
    a_\varepsilon^{ij}(q) \coloneqq (\varepsilon^2 +|q|^2)^{\frac{p-2}{2}}  \left(\delta_{ij} +(p-2) \frac{q_i q_j}{\varepsilon^2+|q|^2}\right) \quad \text{for } q \in \mathbb{R}^n,
\end{equation*}
then we observe that
\begin{equation}\label{eq-quasi-ellipticity}
    \lambda_p (\varepsilon^2 +|q|^2)^{\frac{p-2}{2}} |\xi|^2 
    \leq a_\varepsilon^{ij}(q) \xi_i \xi_j 
    \leq \Lambda_p (\varepsilon^2 +|q|^2)^{\frac{p-2}{2}} |\xi|^2 \quad \text{for all } \xi \in \mathbb{R}^n,
\end{equation}
where 
\begin{equation*}
    \lambda_p\coloneqq \min\{p-1,1\} 
    \quad \text{and} \quad
    \Lambda_p \coloneqq \max\{p-1,1\}.
\end{equation*}
\begin{lemma} [Uniform time derivative estimates] \label{ut<C}
	Let $u^\varepsilon \in C(\overline{Q_1}) \cap C^\infty(Q_1)$ be a solution to \eqref{eq:regularized} in $Q_1$ with $\varepsilon \in (0,1)$ and $p \in (2, \infty)$. If $\|Du^\varepsilon\|_{L^{\infty}(Q_1)}\leq 1$, then 
\begin{equation*}
	\|u_t^\varepsilon\|_{L^\infty(Q_{1/2})} \leq C,
\end{equation*}
where $C>0$ is a constant depending only on $n$ and $p$.
\end{lemma}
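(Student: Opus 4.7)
The plan is to apply the Bernstein technique to the smooth, regularized solution $u^\varepsilon$, setting $w := u^\varepsilon_t$. Differentiating \eqref{eq:regularized} in time gives
\[
w_t - a_\varepsilon^{ij}(Du^\varepsilon)\,w_{ij} = b^k w_k, \qquad b^k := \bigl(\partial_{q_k} a_\varepsilon^{ij}\bigr)(Du^\varepsilon)\,u^\varepsilon_{ij},
\]
where a direct computation shows $|b^k| \lesssim (\varepsilon^2+|Du^\varepsilon|^2)^{(p-4)/2}|Du^\varepsilon|\,\|D^2u^\varepsilon\|$, whereas \eqref{eq-quasi-ellipticity} gives ellipticity of order $(\varepsilon^2+|Du^\varepsilon|^2)^{(p-2)/2}$. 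So $w$ solves a linear parabolic equation whose lower-order coefficient is singular near $\{Du^\varepsilon = 0\}$ and, worse, carries an \emph{a priori} uncontrolled factor $\|D^2u^\varepsilon\|$.

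Following the strategy advertised in the introduction, I would test the auxiliary function
\[
\Phi := \eta^{2} w^{2} + \sigma\bigl(\varepsilon^{2}+|Du^\varepsilon|^{2}\bigr)^{\beta/2},
\]
with $\eta\in C_c^\infty(Q_1)$ a standard cut-off equal to $1$ on $Q_{1/2}$ and with constants $\sigma>0$ and $\beta>0$ to be calibrated. Applying the linearized operator $\mathcal{L} := \partial_t - a_\varepsilon^{ij}\partial_{ij}$ to the two summands yields
\[
\mathcal{L}(w^{2}) = -2 a_\varepsilon^{ij} w_i w_j + 2 w b^k w_k,
\]
while differentiating \eqref{eq:regularized} in $x_k$ and summing after multiplication by powers of $|Du^\varepsilon|$ produces
\[
\mathcal{L}\bigl(\varepsilon^{2}+|Du^\varepsilon|^{2}\bigr)^{\beta/2} \le -c\,\bigl(\varepsilon^{2}+|Du^\varepsilon|^{2}\bigr)^{(p+\beta-4)/2}\|D^{2}u^\varepsilon\|^{2} + \text{(lower order in } D^{2}u^\varepsilon\text{)}.
\]
These are the two non-positive principal terms. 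At an interior maximum $(x_0,t_0)$ of $\Phi$ (otherwise the bound on the parabolic boundary is trivial once $\sigma$ and $\|Du^\varepsilon\|_{L^\infty(Q_1)}\le 1$ are fixed), the first-order condition $D\Phi = 0$ re-expresses the dangerous cross term $\eta^{2} w b^k w_k$ in terms of $w^{2}|D\eta|$ and $\sigma(\varepsilon^{2}+|Du^\varepsilon|^{2})^{\beta/2-1}u_m u_{mk}$.

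The central step is then to choose $\beta$, and subsequently $\sigma$, so that after Cauchy--Schwarz every surviving power of $(\varepsilon^{2}+|Du^\varepsilon|^{2})$, of $|w|$, and of $\|D^{2}u^\varepsilon\|$ is absorbed into one of the two good terms. Wherever $\|D^{2}u^\varepsilon\|$ cannot be absorbed directly, I would invert \eqref{eq:regularized} to substitute $\|D^{2}u^\varepsilon\| \le C|w|/(\varepsilon^{2}+|Du^\varepsilon|^{2})^{(p-2)/2}$ at precisely that point; this is the mechanism by which the dependence on $D^{2}u^\varepsilon$ is traded for dependence on $u^\varepsilon_t$, as alluded to in the introduction. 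Once absorption is achieved, the inequality $\mathcal{L}\Phi(x_0,t_0)\ge 0$ collapses to $\eta^{2}w^{2}(x_0,t_0) \le C(n,p)$, using $\|Du^\varepsilon\|_{L^\infty(Q_1)}\le 1$ to keep the weight $(\varepsilon^{2}+|Du^\varepsilon|^{2})^{\beta/2}$ bounded above.

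The main obstacle I expect is the simultaneous calibration of $\beta$ and $\sigma$: the singular factor in $b^k$, the boundary contributions from $\eta_t$, $D\eta$ and $D^{2}\eta$, and the extra powers introduced by the $\beta$-weight must all be matched, and the final estimate has to remain independent of $\varepsilon\in(0,1)$ so that it can be passed to the limit in \Cref{thm-plaplace}. Inserting the substitution $\|D^{2}u^\varepsilon\| \mapsto |u^\varepsilon_t|/(\varepsilon^{2}+|Du^\varepsilon|^{2})^{(p-2)/2}$ in the correct terms, without losing control in either the degeneracy regime $|Du^\varepsilon| \to 0$ or the upper regime $|Du^\varepsilon| \to 1$, is the quantitative heart of the argument.
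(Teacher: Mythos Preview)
Your overall Bernstein strategy and the shape of the auxiliary function are right, but two concrete gaps would prevent the argument from closing. First, the substitution $\|D^2u^\varepsilon\|\le C|w|/(\varepsilon^2+|Du^\varepsilon|^2)^{(p-2)/2}$ is false: the equation $u^\varepsilon_t=a^{ij}_\varepsilon(Du^\varepsilon)u^\varepsilon_{ij}$ controls only one scalar contraction of the Hessian, so only the reverse inequality $|u^\varepsilon_t|\le C(\varepsilon^2+|Du^\varepsilon|^2)^{(p-2)/2}\|D^2u^\varepsilon\|$ is available. The paper uses precisely this direction---converting factors of $|u^\varepsilon_t|$ into $\|D^2u^\varepsilon\|$, never the other way.

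Second, and this is the structural point your proposal misses, a \emph{fixed} constant $\sigma$ cannot work. After the correct conversion, the drift coefficient $a^{ij}_{\varepsilon,q_l}u^\varepsilon_{ij}$ generates a term of size $C\,\eta|u^\varepsilon_t|\,(\varepsilon^2+|Du^\varepsilon|^2)^{(2p-5)/2}\|D^2u^\varepsilon\|^2$, and $\eta|u^\varepsilon_t|\le A\coloneqq\|\eta u^\varepsilon_t\|_{L^\infty(Q_1)}$ is exactly the unknown you are trying to bound; it cannot be absorbed into a good term carrying only a fixed $\sigma$. The paper's device is to set
\[
v=\eta^2 u_t^2+\delta A\,(\varepsilon^2+|Du|^2)^{(2-\beta)/2},\qquad \beta=\max\{3-p,0\},
\]
so that the good negative term also carries the factor $A$ and absorption reduces to choosing $\delta=\delta(n,p)$ large. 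At the interior maximum the inequality $v_t-Lv\le 0$ then forces $u_t=0$ (it does not directly give $\eta^2 w^2\le C$), and the conclusion is $A^2\le v_{\max}\le 2\delta A$, hence $A\le 2\delta$. A minor further point: use the full linearization $Lw=a^{ij}_\varepsilon w_{ij}+a^{ij}_{\varepsilon,q_l}u^\varepsilon_{ij}w_l$ rather than your $\mathcal{L}$ without drift; then $(\partial_t-L)u_k=0$ and $(\partial_t-L)u_t=0$ hold exactly, which cancels a quadratic-in-$D^2u$ term in the computation of $(\partial_t-L)(\varepsilon^2+|Du|^2)^{\beta/2}$ that is otherwise \emph{not} lower order.
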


\begin{proof}
For convenience of notation, we omit $\varepsilon$ from $u^\varepsilon$ and $a_\varepsilon^{ij}$. It immediately follows from \eqref{eq:regularized} that 
\begin{equation} \label{ut_hass}
    |u_t| = |a^{ij}(Du)u_{ij}| \leq \|a^{ij}(Du)\| \|D^2u\| \leq \sqrt{n}\Lambda_p(\varepsilon^2 +|Du|^2)^{\frac{p-2}{2}}\|D^2u\|.
\end{equation}

Moreover, since the partial derivative of $a^{ij}(q)$ with respect to $q_l$, for $l=1,\cdots,n$, is given by
\begin{align*}
    a_{q_l}^{ij}(q)&=\frac{(p-2) q_l a^{ij}(q)}{\varepsilon^2 +|q|^2} + (p-2)(\varepsilon^2 +|q|^2)^{\frac{p-2}{2}} \left(\frac{\delta_{il}q_j+\delta_{jl}q_i}{\varepsilon^2 +|q|^2}- \frac{2q_i q_j q_l}{(\varepsilon^2 +|q|^2)^2}\right),
\end{align*}
it is easy to see that 
\begin{equation} \label{est_avl}
\begin{aligned}
    \|a_{q_l}^{ij}(Du)\| 
    &\leq C_1(\varepsilon^2 +|Du|^2)^{\frac{p-3}{2}}.
\end{aligned}
\end{equation}
for some $C_1=C_1(n, p)>0$. Finally, differentiating \eqref{eq:regularized} with respect to the variables $t$ and $x_k$ gives, respectively: 
\begin{equation*}
    u_{tt} = a^{ij}(Du) u_{ijt} + a_{q_l}^{ij}(Du) u_{ij} u_{lt} \quad \text{and} \quad
    u_{kt} = a^{ij}(Du) u_{ijk} + a_{q_l}^{ij}(Du) u_{ij} u_{lk}. 
\end{equation*}

We next let $A(=A^{\varepsilon}) \coloneqq \|\eta u_t \|_{L^{\infty}(Q_1)}$, where $\eta\in C_c^\infty(Q_1)$ is a nonnegative cut-off function with $\eta\equiv1$ on $\overline{Q_{1/2}}$. Since we are done if $A\leq1$, we may assume that $A>1$ without loss of generality. For constants $\beta \in[0,1)$ and $\delta>0$ to be determined later, we set 
\begin{equation*}
    v\coloneqq \eta^2 u_t^2 + \delta A(\varepsilon^2 + |Du|^2)^{\frac{2-\beta}{2}}    
\end{equation*}
and consider the linearized operator 
\begin{equation*}
    L w\coloneqq a^{ij}(Du) w_{ij} + a_{q_l}^{ij}(Du)u_{ij} w_l
\end{equation*}
of $Q(D^2w, Dw)\coloneqq a^{ij}(Dw)w_{ij}$ at $(D^2u,Du)$. It follows from a direct calculation that
\begin{align*}
    v_t &= 2\eta \eta_t u_t^2 + 2 \eta^2 u_t u_{tt} + (2-\beta)\delta A (\varepsilon^2 + |Du|^2)^{-\frac{\beta}{2}} u_k u_{kt} \\
    v_{i} &=2\eta \eta_i u_t^2 + 2\eta^2 u_t u_{ti} + (2-\beta)\delta A(\varepsilon^2+|Du|^2)^{-\frac{\beta}{2}} u_k u_{ki} \\
    v_{ij} &= 2\eta_i \eta_j u_t^2 + 2\eta \eta_{ij} u_t^2 + 8 \eta \eta_i u_t u_{tj} + 2\eta^2 u_{ti}u_{tj} +2\eta^2 u_t u_{ijt} \\
    &\quad - \beta(2-\beta)\delta A(\varepsilon^2+|Du|^2)^{-\frac{\beta+2}{2}} u_k u_l u_{ki} u_{lj} + (2-\beta)\delta A (\varepsilon^2+|Du|^2)^{-\frac{\beta}{2}} u_{ki} u_{kj} \\
    &\quad+ (2-\beta)\delta A (\varepsilon^2+|Du|^2)^{-\frac{\beta}{2}} u_k u_{ijk}.
\end{align*}
Thus, $v$ satisfies
\begin{equation}\label{vt-Lv}
\begin{aligned} 
    v_t - Lv
    &= 2\eta \eta_t u_t^2\\
    &\quad -2a^{ij}(Du) \left(\eta_i \eta_ju_t^2+\eta^2  u_{ti}u_{tj}+\eta \eta_{ij} u_t^2+4\eta \eta_iu_t  u_{tj}\right)\\
    &\quad -2a^{ij}_{q_l}(Du)\eta \eta_l u_t^2 u_{ij} \\
    &\quad + (2-\beta)\delta A(\varepsilon^2+|Du|^2)^{-\frac{\beta+2}{2}} a^{ij}(Du)\left(\beta u_k u_l  u_{ki} u_{lj} - (\varepsilon^2+|Du|^2)u_{ki} u_{kj} \right). 
\end{aligned}
\end{equation}
Let us estimate each term in \eqref{vt-Lv}. It follows from the ellipticity \eqref{eq-quasi-ellipticity} that
\begin{equation}\label{eq-est-1}
    -2a^{ij}(Du) \left(\eta_i \eta_ju_t^2+\eta^2  u_{ti}u_{tj} \right) \leq -2\lambda_p (\varepsilon^2+|Du|^2)^{\frac{p-2}{2}} \left(|D\eta|^2u_t^2+\eta^2|Du_t|^2 \right)
\end{equation}
and
\begin{equation}\label{eq-est-2}
    -2a^{ij}(Du)\eta \eta_{ij} u_t^2 \leq 2\sqrt{n} \Lambda_p (\varepsilon^2+|Du|^2)^{\frac{p-2}{2}} \|D^2\eta\| \eta u_t^2.
\end{equation}
By applying Young's inequality, we have
\begin{equation}\label{eq-est-3}
\begin{aligned}
   -8a^{ij}(Du) \eta \eta_iu_t  u_{tj}
    &\leq 8\Lambda_p (\varepsilon^2 +|Du|^2)^{\frac{p-2}{2}} \eta|D\eta| |u_t| |Du_t| \\ 
    &\leq (\varepsilon^2 +|Du|^2)^{\frac{p-2}{2}}(2\lambda_p \eta^2|Du_t|^2 + 8\lambda_p^{-1}\Lambda_p^2 |D\eta|^2 u_t^2).
\end{aligned}
\end{equation}
Moreover, the estimates \eqref{ut_hass}, \eqref{est_avl} and the choice of $A$ give  
\begin{equation} \label{eq-est-4}
\begin{aligned}
    -2a^{ij}_{q_l}(Du)\eta \eta_l u_t^2 u_{ij}
    &\leq 2\sqrt{n}\Lambda_pC_1 A(\varepsilon^2 +|Du|^2)^{\frac{2p-5}{2}} |D\eta| \|D^2 u\|^2.
\end{aligned}
\end{equation}
On the other hand, we claim that the following inequality holds:
\begin{equation} \label{eq-est-5}
     a^{ij}(Du) \left(\beta u_k u_l  u_{ki} u_{lj} -(\varepsilon^2+|Du|^2)u_{ki} u_{kj} \right)
     \leq -(1-\beta)\lambda_p (\varepsilon^2 +|Du|^2)^{\frac{p}{2}} \|D^2u\|^2 .
\end{equation}
In fact, by applying the Cauchy--Schwartz inequality, we have
\begin{equation} \label{apply_CSI}
\begin{aligned}
    \sum_{i,j,k,l=1}^n u_k u_l u_{ki} u_{lj}\xi_i\xi_j
    &=\bigg( \sum_{i, k=1}^n u_ku_{ki}\xi_i \bigg)^2=\bigg( \sum_{k=1}^n u_k \Big(\sum_{i=1}^n u_{ki}\xi_i\Big) \bigg)^2\\
    &\leq |Du|^2  \sum_{k=1}^n \Big(\sum_{i=1}^n  u_{ki}\xi_i\Big)^2 = |Du|^2 \sum_{k=1}^n \Big(\sum_{i=1}^n u_{ki}\xi_i\Big) \Big(\sum_{j=1}^n u_{kj} \xi_j\Big) \\
    &=|Du|^2\sum_{i,j,k=1}^n u_{ki} u_{kj}\xi_i\xi_j \leq (\varepsilon^2+|Du|^2)\sum_{i,j,k=1}^n u_{ki} u_{kj}\xi_i\xi_j 
\end{aligned}
\end{equation}
for all $\xi \in \mathbb{R}^n$. Since the matrix $(a^{ij}(Du))_{1 \leq i, j \leq n}$ is positive definite and $\beta \in [0, 1)$, we arrive at \eqref{eq-est-5}.

We then combine \eqref{vt-Lv}-\eqref{eq-est-5} to obtain
\begin{align*}
    v_t - Lv
    &\leq 2\eta |\eta_t| u_t^2+(\varepsilon^2 +|Du|^2)^{\frac{p-2}{2}}\big(-2\lambda_p |D\eta|^2 +2\sqrt{n}\Lambda_p\|D^2\eta\|\eta + 8\lambda_p^{-1}\Lambda_p^2|D\eta|^2 \big)u_t^2 \\
    &\quad +2\sqrt{n}\Lambda_pC_1 A(\varepsilon^2 +|Du|^2)^{\frac{2p-5}{2}} |D\eta| \|D^2 u\|^2\\
    &\quad -\lambda_p(2-\beta)(1-\beta)\delta A (\varepsilon^2+|Du|^2)^{\frac{p-\beta-2}{2}}\|D^2 u\|^2.
\end{align*}
By comparing the exponent of $(\varepsilon^2+|Du|^2)$ in each term and considering the relation $|u_t| \lesssim (\varepsilon^2+|Du|^2)^{\frac{p-2}{2}}\|D^2u\|$, it turns out that the last term dominates the other terms when
\begin{equation*}
        \frac{p-\beta-2}{2} \leq \min \left\{p-2, \frac{3(p-2)}{2}, \frac{2p-5}{2} \right\}=\frac{2p-5}{2}.
\end{equation*}
Therefore, if we choose $\beta=\max\{3-p,0\} \in [0, 1)$, then we can take sufficiently large $\delta=\delta(n, p)>0$ to conclude that
\begin{equation} \label{vt-lv<0}
\begin{aligned}
    v_t - Lv \leq -C_2 \delta A (\varepsilon^2 +|Du|^2)^{-\frac{p+\beta-2}{2}} u_t^2
\end{aligned}
\end{equation}
for some $C_2=C_2(n, p)>0$.

We now suppose that $v$ attains its maximum at some point $(x_0,t_0) \in \overline{Q_1}$ and $|\eta u_t|$ attains its maximum at some point $(x_1,t_1) \in \overline{Q_1}$. If $(x_0, t_0)$ is an interior point, then $v_t(x_0,t_0) -Lv(x_0,t_0) \geq 0$ and hence \eqref{vt-lv<0} implies that $u_t(x_0,t_0)=0$. On the other hand, if $(x_0, t_0)$ is a boundary point, then $\eta(x_0, t_0)=0$ by the choice of the cut-off function.

Thus, in both cases, we have
\begin{equation*}
    A^2 \leq v(x_1,t_1) \leq v(x_0,t_0)= \delta A (\varepsilon^2+|Du(x_0,t_0)|^2)^{\frac{2-\beta}{2}} \leq 2\delta A,
\end{equation*}
which implies the desired uniform estimate
\begin{equation*}
    \|u_t\|_{L^{\infty}(Q_{1/2})} \leq \|\eta u_t\|_{L^{\infty}(Q_1)}=A \leq 2\delta=2\delta(n, p).
\end{equation*}
\end{proof}

We are now ready to prove \Cref{thm-plaplace}.
\begin{proof}[Proof of \Cref{thm-plaplace}]
    Since $u \in C(Q_1) \subset C(\overline{Q_{3/4}})$, \Cref{thm-solvability-quasilinear} guarantees the existence of a unique smooth solution $u^{\varepsilon} \in C(\overline{Q_{3/4}}) \cap C^{\infty}(Q_{3/4})$ to the regularized Cauchy--Dirichlet problem
    \begin{equation*} 
		\left\{
		\begin{aligned}
			u^{\varepsilon}_t&= a^{ij}_{\varepsilon}(Du^{\varepsilon})u^{\varepsilon}_{ij} && \text{in $Q_{3/4}$}\\
			u^{\varepsilon}&=u && \text{on $\partial_p Q_{3/4}$}.
		\end{aligned}\right.
    \end{equation*}
We observe that the maximum principle yields
\begin{equation*}
    \|u^{\varepsilon}\|_{L^{\infty}(Q_{3/4})} \leq \|u\|_{L^{\infty}(Q_{3/4})}.
\end{equation*}
Moreover, it follows from the uniform global modulus of continuity \Cref{lem:mod_conti} and Arzela--Ascoli theorem that there exist a subsequence $\{u^{\varepsilon_k}\}_{k \in \mathbb{N}}$ and a function $\overline{u} \in C(\overline{Q_{3/4}})$ such that $u^{\varepsilon_k} \to \overline{u}$ uniformly in $\overline{Q_{3/4}}$ as $\varepsilon_k \to 0$. By the stability theorem (see \cite[Theorem~2.10]{LLY24} for instance), $\overline{u}$ is a viscosity solution to \eqref{eq-plaplace} and by the comparison principle (see \cite[Theorem~4.7]{JLM01} for instance), we obtain that $u \equiv \overline{u}$ in $\overline{Q_{3/4}}$.

On the other hand, the uniform interior Lipschitz estimate \Cref{thm:IJS19} shows that
\begin{equation*}
    \|Du^{\varepsilon}\|_{L^{\infty}(Q_{1/2})} \leq C_0=C_0\left(n, p, \|u\|_{L^{\infty}(Q_{3/4})}\right).
\end{equation*}
We also observe that if $u^{\varepsilon}$ solves 
    \begin{equation*}
        u^{\varepsilon}_t=(\varepsilon^2+|Du^{\varepsilon}|^2)^{\frac{p-2}{2}} \left( \Delta u^{\varepsilon}+(p-2)\frac{u^{\varepsilon}_iu^{\varepsilon}_j}{\varepsilon^2+|Du^{\varepsilon}|^2} u^{\varepsilon}_{ij}\right) \quad \text{in $Q_{r}^{\rho}=B_r\times (-\rho^{2-p}r^2,0]$}
    \end{equation*}
    for $r=1/2$ and some $\rho>0$ to be determined soon, then $v^{\varepsilon}(x, t) \coloneqq \frac{1}{r\rho}u^{\varepsilon}(rx, r^2\rho^{2-p}t)$ solves
    \begin{equation*}
       v^{\varepsilon}_t=(\varepsilon^2\rho^{-2}+|Dv^{\varepsilon}|^2)^{\frac{p-2}{2}} \left( \Delta v^{\varepsilon}+(p-2)\frac{v^{\varepsilon}_iv^{\varepsilon}_j}{\varepsilon^2\rho^{-2}+|Dv^{\varepsilon}|^2} v^{\varepsilon}_{ij}\right) \quad \text{in $Q_1$}.
    \end{equation*}
    Thus, if we choose $\rho=C_0+1$, then $\|Dv^{\varepsilon}\|_{L^{\infty}(Q_1)} \leq 1$ and so we can apply \Cref{ut<C} to achieve the uniform estimate
    \begin{equation*}
        \|v_t^{\varepsilon}\|_{L^{\infty}(Q_{1/2})} \leq C=C(n, p).
    \end{equation*}
    By scaling back, taking a limit $\varepsilon \to 0$ and using a standard covering argument, we conclude the desired result for $u$.
\end{proof}

By applying the regularity theory for elliptic problems with the reduction argument through \Cref{thm-plaplace}, we obtain regularity results only on hypersurfaces restricted to each time slice. However, by combining H\"older gradient estimates in the spatial variables with H\"older continuity in the time variable, we are able to capture the mixed space-time regularity of the gradient $Du$. It is noteworthy that the following result is independent of the specific choice of equations.

\begin{lemma}\label{lem-simple}
    Let $u \in C(Q_1)$ satisfy 
    \begin{enumerate} [label=(\roman*)]
        \item $u \in C^{1, \alpha}_x$ for some $\alpha \in (0,1]$, i.e.,
    \begin{equation*}
        |u(y, t)- u(x,t) -Du(x, t) \cdot (y-x)| \leq C|x-y|^{1+\alpha}  \quad \text{for all $(x, t), (y, t) \in Q_{1/2}$},
    \end{equation*}
    \item $u \in C^{\beta}_t$ for some $\beta \in (0,1]$, i.e.,
    \begin{equation*}
        |u(x, t)-u(x, s)| \leq C|t-s|^{\beta} \quad \text{for all $(x, t), (x, s) \in Q_{1/2}$}.
    \end{equation*}
    \end{enumerate}
    Then we have
    \begin{equation*}
        |Du(x, t)-Du(x, s)| \leq C|t-s|^{\frac{\alpha \beta}{1+\alpha}} \quad \text{for all $(x, t), (x, s) \in Q_{1/4}$}.
    \end{equation*}
\end{lemma}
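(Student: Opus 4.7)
The plan is a standard interpolation argument: trade the first-order Taylor remainders in space against the time oscillation of $u$ at two close times, then optimize in the spatial scale.

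Fix $(x,t),(x,s)\in Q_{1/4}$ with $t\neq s$, and set $\tau\coloneqq|t-s|$. For any $y$ with $(y,t),(y,s)\in Q_{1/2}$ the $C^{1,\alpha}_x$ hypothesis applied at $(x,t)$ and at $(x,s)$ gives
\begin{equation*}
    u(y,t)-u(x,t)=Du(x,t)\cdot(y-x)+E_1,\qquad u(y,s)-u(x,s)=Du(x,s)\cdot(y-x)+E_2,
\end{equation*}
with $|E_1|,|E_2|\le C|y-x|^{1+\alpha}$. Subtracting and using the $C^{\beta}_t$ bound on $u$ at the points $x$ and $y$, we obtain the pointwise inequality
\begin{equation*}
    \bigl|[Du(x,t)-Du(x,s)]\cdot(y-x)\bigr|\le 2C\tau^{\beta}+2C|y-x|^{1+\alpha}.
\end{equation*}

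Next I would choose $y-x$ to point in the direction of $Du(x,t)-Du(x,s)$ with length $r>0$, yielding
\begin{equation*}
    |Du(x,t)-Du(x,s)|\,r\le 2C\tau^{\beta}+2Cr^{1+\alpha},
\end{equation*}
and then optimize by balancing the two right-hand side terms: set $r^{1+\alpha}=\tau^{\beta}$, i.e.\ $r=\tau^{\beta/(1+\alpha)}$. This gives
\begin{equation*}
    |Du(x,t)-Du(x,s)|\le 4C\,\tau^{\alpha\beta/(1+\alpha)},
\end{equation*}
which is exactly the claimed estimate.

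The only technical point to check is that the chosen $r$ is admissible, namely $r\le 1/4$ so that $(y,t),(y,s)\in Q_{1/2}$; since $\tau\le (1/4)^2$ on $Q_{1/4}$, this is automatic for $\tau$ small, and for $\tau$ of order one the conclusion is trivial from $\|Du\|_{L^{\infty}}<\infty$ (which itself follows from the $C^{1,\alpha}_x$ assumption). No step looks genuinely delicate; the interpolation exponent $\alpha\beta/(1+\alpha)$ is forced by dimensional balance between $\tau^{\beta}/r$ and $r^{\alpha}$.
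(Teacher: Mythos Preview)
Your proposal is correct and follows essentially the same argument as the paper: combine the two first-order Taylor remainders with the time H\"older bound to control $|(Du(x,t)-Du(x,s))\cdot(y-x)|$, then choose $y-x$ in the direction of the gradient difference with length $|t-s|^{\beta/(1+\alpha)}$. The paper absorbs your admissibility discussion by inserting a factor $1/4$ in the choice of $y$, but otherwise the two proofs coincide.
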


\begin{proof}
    Without loss of generality, we may assume $Du(x, t) \neq Du(x, s)$. For given $(x,t), (x,s) \in Q_{1/4}$, we observe that
    \begin{equation*}
        \begin{aligned}
            &|(Du(x, t)-Du(x, s)) \cdot (y-x)|\\
            &\quad \leq |u(y, t)- u(x,t)-Du(x, t) \cdot (y-x)|+|u(y, s)- u(x,s)-Du(x, s) \cdot (y-x)| \\
            &\quad \quad +|u(y, t)-u(y, s)| + |u(x, t)-u(x, s)| \\
            &\quad \leq C(|x-y|^{1+\alpha}+ |t-s|^{\beta})
        \end{aligned}
    \end{equation*}
    for $y \in B_{1/2}$ to be determined soon. If we choose
    \begin{equation*}
        y=x+\frac{|t-s|^{\frac{\beta}{1+\alpha}}}{4} \frac{Du(x, t)-Du(x, s)}{|Du(x, t)-Du(x, s)|} \in B_{1/2},
    \end{equation*}
    then the desired mixed regularity follows.
\end{proof}

We finish this section with the proof of \Cref{thm-reduction-quasi}.

\begin{proof} [Proof of \Cref{thm-reduction-quasi}]
    By \Cref{thm-plaplace}, we have $u_t \in L^{\infty}(Q_{3/4})$ with the estimate \eqref{op_time_PLE}. Then we can interpret $u$ as a viscosity solution to elliptic $p$-Laplace equations
    \begin{equation*}
        -\Delta_p u=f\coloneqq -u_t \in L^{\infty} \quad \text{in $B_{3/4}$.}
    \end{equation*}
    Then the estimate \eqref{op_c1a_PLE} follows from the definition of $\bar\alpha$ and \Cref{lem-simple}.
\end{proof}

%
%
\section{Applications}\label{sec:applications}
We provide several applications of the Bernstein technique and the approximation scheme discussed in the proof of \Cref{ut<C} and \Cref{thm-plaplace}, respectively. Namely, we study the regularity of solutions to fully nonlinear degenerate parabolic equations and general quasilinear degenerate equations, not only in the interior but also up to the boundary.

\subsection{Fully nonlinear equations} \label{subsec:FN}
In this subsection, we investigate the interior regularity of viscosity solutions to fully nonlinear degenerate parabolic equations \eqref{eq-fullynonlinear} with $\gamma>0$. More precisely, our goal is to deduce the following uniform estimates:
\begin{equation*}
	|Du(x, t)-Du(y, s)| \leq C\left(|x-y|^{\frac{1}{1+\gamma}}+|t-s|^{\frac{1}{2+\gamma}}\right) \quad \text{for all $(x, t), (y,s) \in \overline{Q_{1/2}}$}
\end{equation*}
and 
\begin{equation*}
    |u(x, t)-u(x, s)| \leq C|t-s| \quad \text{for all $(x, t), (x,s) \in \overline{Q_{1/2}}$}.
\end{equation*}
Let us begin with the following fully nonlinear counterpart of \Cref{ut<C}.
\begin{lemma} [Uniform time derivative estimates] \label{thm-ut-fullynonlinear}
	Let $u^\varepsilon \in C(\overline{Q_1}) \cap C^\infty(Q_1)$ be a solution to \eqref{eq:regularized2} in $Q_1$ with $\varepsilon \in (0,1)$ and $\gamma >0$. If $\|Du^\varepsilon\|_{L^{\infty}(Q_1)}\leq 1$, then 
\begin{equation*}
	\|u_t^\varepsilon\|_{L^\infty(Q_{1/2})} \leq C,
\end{equation*}
where $C>0$ is a constant depending only on $n$, $\gamma$, $\lambda$ and $\Lambda$.
\end{lemma}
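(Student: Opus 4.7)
The strategy mirrors the proof of \Cref{ut<C} for the parabolic $p$-Laplace equation. Writing the regularized equation as $u^\varepsilon_t = G(Du^\varepsilon, D^2u^\varepsilon)$ with $G(q,M) = (\varepsilon^2+|q|^2)^{\gamma/2} F^\varepsilon(M)$, the linearized operator
\begin{equation*}
    Lw := a^{ij} w_{ij} + b^l w_l, \qquad a^{ij} := G_{M_{ij}}(Du, D^2u), \qquad b^l := G_{q_l}(Du, D^2u),
\end{equation*}
inherits from the uniform ellipticity of $F^\varepsilon$ the bounds $\lambda(\varepsilon^2+|Du|^2)^{\gamma/2}|\xi|^2 \le a^{ij}\xi_i \xi_j \le \Lambda(\varepsilon^2+|Du|^2)^{\gamma/2}|\xi|^2$. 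A direct computation using the equation gives the closed form $b^l = \gamma(\varepsilon^2 + |Du|^2)^{-1} u_l u_t$, and differentiating the equation in $t$ and in $x_k$ shows that both $u_t$ and $u_k$ solve $w_t = Lw$.

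I would then follow the pattern of \Cref{ut<C} and introduce the auxiliary function
\begin{equation*}
    v := \eta^2 u_t^2 + \delta A (\varepsilon^2 + |Du|^2)^{(2-\beta)/2},
\end{equation*}
where $A := \|\eta u_t\|_{L^\infty(Q_1)}$, $\eta \in C_c^\infty(Q_1)$ is a nonnegative cut-off with $\eta \equiv 1$ on $\overline{Q_{1/2}}$, and $\beta \in [0,1)$, $\delta > 0$ are parameters to be determined. The third-order derivatives cancel in $v_t - Lv$ because $u_t$ and $u_k$ satisfy $w_t = Lw$, and the hessian contribution from the $\delta A$-part, after applying the same Cauchy--Schwarz manipulation as in the $p$-Laplace proof, yields a negative lead term of the form
\begin{equation*}
    -(1-\beta)(2-\beta)\delta A \lambda (\varepsilon^2 + |Du|^2)^{(\gamma-\beta)/2} \|D^2u\|^2.
\end{equation*}
The quadratic-in-$u_t^2$ cut-off errors are handled exactly as before, absorbed via Young's inequality into the good negative term $-2a^{ij}\eta^2 u_{ti}u_{tj}$.

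The principal new difficulty, and the place where the fully nonlinear setting departs from \Cref{ut<C}, is the drift contribution $-2\eta\eta_l u_t^2 b^l$, which is \emph{cubic} in $u_t$ rather than quadratic. The key observation is $|b^l| \le \gamma(\varepsilon^2+|Du|^2)^{-1/2}|u_t|$, so combining the pointwise bound $\eta|u_t| \le A$ (to convert one factor of $\eta|u_t|$ into $A$) with the consequence $|u_t| \le n\Lambda(\varepsilon^2+|Du|^2)^{\gamma/2}\|D^2u\|$ of the equation, this term is controlled by $CA|D\eta|(\varepsilon^2+|Du|^2)^{(2\gamma-1)/2} \|D^2u\|^2$. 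Comparing the powers of $(\varepsilon^2+|Du|^2)$ in every error term against the lead exponent $(\gamma-\beta)/2$, the binding constraint becomes $(\gamma-\beta)/2 \le (2\gamma-1)/2$, that is, $\beta \ge 1-\gamma$. Choosing $\beta = \max\{1-\gamma,\, 0\} \in [0,1)$ and $\delta = \delta(n, \gamma, \lambda, \Lambda)$ sufficiently large, and using $\varepsilon^2 + |Du|^2 \le 2$ together with $A \ge 1$ (without loss of generality), we arrive at the pointwise differential inequality
\begin{equation*}
    v_t - Lv \le -C\delta A (\varepsilon^2 + |Du|^2)^{-(\gamma+\beta)/2} u_t^2 \quad \text{in } Q_1.
\end{equation*}

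The conclusion then follows from the same parabolic maximum-principle argument as in \Cref{ut<C}: if $v$ attains its maximum over $\overline{Q_1}$ at an interior point $(x_0, t_0)$, then $v_t - Lv \ge 0$ there forces $u_t(x_0, t_0) = 0$; if the maximum lies on $\partial_p Q_1$, then $\eta = 0$ there by choice of the cut-off. In either case $v(x_0, t_0) \le \delta A(\varepsilon^2+|Du(x_0,t_0)|^2)^{(2-\beta)/2} \le 2\delta A$, and since $A^2 \le v(x_0, t_0)$ we deduce $A \le 2\delta$, which yields the desired uniform estimate on $\|u_t^\varepsilon\|_{L^\infty(Q_{1/2})}$.
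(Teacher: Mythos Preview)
Your proposal is correct and follows essentially the same route as the paper's proof: the same auxiliary function $v$, the same linearization, the same Cauchy--Schwarz step for the Hessian block, and the same choice $\beta=\max\{1-\gamma,0\}$. The only cosmetic difference is bookkeeping: you rewrite the drift coefficient in the closed form $b^l=\gamma(\varepsilon^2+|Du|^2)^{-1}u_l u_t$ and push the drift error all the way to a $\|D^2u\|^2$-term before comparing exponents, whereas the paper keeps it as a $u_t^2$-term and instead converts the lead negative $\|D^2u\|^2$-term into a $u_t^2$-term; the resulting exponent comparison and conclusion are identical.
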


\begin{proof}
For convenience of notation, we omit $\varepsilon$ from $u^\varepsilon$ and $F^{\varepsilon}$. We understand the fully nonlinear degenerate operator $\widetilde{F}(D^2w, Dw)$ as
\begin{equation*}
    \widetilde{F}(M, q)\coloneqq(\varepsilon^2+|q|^2)^{\frac{\gamma}{2}}F(M) \quad \text{for $M=D^2w \in \mathcal{S}^n$ and $q=Dw \in \mathbb{R}^n$}.
\end{equation*}
Then the linearized operator $Lw$ of $\widetilde{F}(D^2w, Dw)$ at $(D^2u,Du)$ is given by
\begin{equation*}
    \begin{aligned}
        L w&\coloneqq  \frac{\partial \widetilde{F}}{\partial M_{ij}}\bigg|_{(D^2u, Du)}  w_{ij} + \frac{\partial \widetilde{F}}{\partial q_l}\bigg|_{(D^2u, Du)} w_l\\
        &=(\varepsilon^2+|Du|^2)^{\frac{\gamma}{2}} F_{ij}(D^2u) \, w_{ij} + \gamma (\varepsilon^2+|Du|^2)^{\frac{\gamma}{2}-1} F(D^2u)\, u_lw_l.
    \end{aligned}
\end{equation*}
By the assumption \ref{F1} (the uniform ellipticity of $F$), we observe that
\begin{equation}\label{eq-ellipticity0}
    \lambda |\xi|^2 \leq F_{ij}(M) \xi_i \xi_j \leq \Lambda |\xi|^2 \quad \text{for all $M \in \mathcal{S}^n$ and $\xi \in \mathbb{R}^n$.}
\end{equation}
From \eqref{eq:regularized2} and \ref{F1}, we obtain 
\begin{equation} \label{eq-ellipticity}
    |u_t| = | (\varepsilon^2+|Du|^2)^{\frac{\gamma}{2}}F(D^2u)| \leq n \Lambda (\varepsilon^2+|Du|^2)^{\frac{\gamma}{2}} \|D^2u\|.
\end{equation}
Moreover, differentiating \eqref{eq:regularized2} with respect to the variables $t$ and $x_k$ gives, respectively: 
\begin{align*}
     u_{tt} &= (\varepsilon^2+|Du|^2)^{\frac{\gamma}{2}}F_{ij}(D^2u)u_{ijt} + \gamma (\varepsilon^2+|Du|^2)^{\frac{\gamma}{2}-1} F(D^2u)\, u_l u_{lt}=Lu_t \quad \text{and} \\
     u_{kt} &= (\varepsilon^2+|Du|^2)^{\frac{\gamma}{2}}F_{ij}(D^2u) u_{ijk} + \gamma (\varepsilon^2+|Du|^2)^{\frac{\gamma}{2}-1} F(D^2u)\, u_lu_{lk}=Lu_k. 
\end{align*}
Let $A \coloneqq \|\eta u_t \|_{L^{\infty}(Q_1)}$ and $\eta\in C_c^\infty(Q_1)$ be a nonnegative cut-off function with $\eta\equiv1$ on $\overline{Q_{1/2}}$. Since we are done if $A\leq1$, we may assume that $A>1$ without loss of generality. For constants $\beta \in [0,1)$ and $\delta>0$ to be determined later, we set 
\begin{equation*}
 v\coloneqq \eta^2 u_t^2 + \delta A (\varepsilon^2+|Du|^2)^{\frac{2-\beta}{2}}.   
\end{equation*}
Then $v$ satisfies
\begin{equation}\label{eq-v}
\begin{aligned} 
    v_t - Lv
    &=2\eta \eta_t u_t^2\\
    &\quad -2(\varepsilon^2+|Du|^2)^{\frac{\gamma}{2}} F_{ij}(D^2u) \left(\eta_i \eta_ju_t^2 +\eta^2u_{ti}u_{tj} +\eta \eta_{ij}u_t^2+ 4 \eta \eta_i u_t u_{tj}\right) \\
    &\quad  -2 \gamma (\varepsilon^2+|Du|^2)^{\frac{\gamma-2}{2}} F(D^2u)\eta \eta_l u_lu_t^2\\
    &\quad + (2-\beta)\delta A (\varepsilon^2+|Du|^2)^{\frac{\gamma-\beta-2}{2}}F_{ij}(D^2u) \left(\beta u_k u_l  u_{ki} u_{lj} -(\varepsilon^2+|Du|^2) u_{ki} u_{kj} \right). 
\end{aligned}
\end{equation}
Let us estimate each term in \eqref{eq-v}. It follows from \eqref{eq-ellipticity0} that
\begin{equation}\label{eq-estimate0}
    -2(\varepsilon^2+|Du|^2)^{\frac{\gamma}{2}} F_{ij}(D^2u)(\eta_i \eta_ju_t^2+\eta^2u_{ti}u_{tj}) \leq -2\lambda(\varepsilon^2+|Du|^2)^{\frac{\gamma}{2}}\left(|D\eta|^2u_t^2+\eta^2|Du_t|^2 \right)
\end{equation}
and
\begin{equation}\label{eq-estimate1}
    -2 (\varepsilon^2+|Du|^2)^{\frac{\gamma}{2}} F_{ij}(D^2u)\eta \eta_{ij}u_t^2 \leq 2\sqrt{n}\Lambda(\varepsilon^2+|Du|^2)^{\frac{\gamma}{2}}\|D^2\eta\|\eta u_t^2.
\end{equation}
By applying Young's inequality, we have
\begin{equation} \label{eq-estimate2}
\begin{aligned}
   -8(\varepsilon^2+|Du|^2)^{\frac{\gamma}{2}} F_{ij}(D^2u)\eta \eta_i u_t u_{tj}  &\leq 8\Lambda (\varepsilon^2 +|Du|^2)^{\frac{\gamma}{2}} \eta|D\eta| |u_t| |Du_t| \\ 
    &\leq (\varepsilon^2 +|Du|^2)^{\frac{\gamma}{2}}(2\lambda \eta^2|Du_t|^2 + 8\lambda^{-1}\Lambda^2 |D\eta|^2 u_t^2).
\end{aligned}
\end{equation}
Moreover, the estimate \eqref{eq-ellipticity} and the choice of $A$ yield  
\begin{equation} \label{eq-estimate3}
\begin{aligned}
    -2 \gamma (\varepsilon^2+|Du|^2)^{\frac{\gamma-2}{2}} F(D^2u)\eta \eta_l u_lu_t^2
    &\leq 2 \gamma A (\varepsilon^2+|Du|^2)^{-\frac{1}{2}} |D\eta| u_t^2.
\end{aligned}
\end{equation}
On the other hand, we apply \eqref{apply_CSI} to have the following inequality.
\begin{equation} \label{eq-claim}
     F_{ij}(D^2u) \left(\beta u_k u_l  u_{ki} u_{lj} -(\varepsilon^2+|Du|^2) u_{ki} u_{kj} \right) \leq -(1-\beta) \lambda (\varepsilon^2+|Du|^2) \|D^2u\|^2. 
\end{equation}
We then combine \eqref{eq-v}-\eqref{eq-claim} to obtain 
\begin{align*}
    v_t - Lv
    &\leq 2\eta|\eta_t|u_t^2\\
    &\quad +(\varepsilon^2+|Du|^2)^{\frac{\gamma}{2}}\left(-2\lambda|D\eta|^2+2\sqrt{n}\Lambda\eta\|D^2\eta\| +8\lambda^{-1}\Lambda^2 |D\eta|^2  \right)u_t^2 \\
    &\quad +2\gamma A (\varepsilon^2+|Du|^2)^{-\frac{1}{2}} |D\eta| u_t^2\\
    &\quad -(2-\beta)(1-\beta) (n\Lambda)^{-2} \delta A(\varepsilon^2+|Du|^2)^{-\frac{\gamma+\beta}{2}}u_t^2.
\end{align*}
By comparing the exponent of $(\varepsilon^2+|Du|^2)$ in each term, it turns out that the last term dominates the other terms when
\begin{equation*}
    -\frac{\gamma+\beta}{2} \leq \min\left\{0,  \frac{\gamma}{2}, -\frac{1}{2}  \right\}=-\frac{1}{2}.
\end{equation*}
Therefore, if we choose $\beta=\max\{1-\gamma,0\} \in [0, 1)$, then we can take sufficiently large $\delta=\delta(n, \gamma, \lambda, \Lambda)>0$ to conclude that
\begin{equation*} 
    v_t-Lv \leq -C \delta A(\varepsilon^2+|Du|^2)^{-\frac{\gamma+\beta}{2}}u_t^2
\end{equation*}
for some $C=C(n, \gamma, \lambda, \Lambda)>0$. The remainder of the proof is the same as in \Cref{ut<C}.
\end{proof}

We next recover the Lipschitz regularity of $u$ with respect to $t$ by taking the limit $\varepsilon \to 0$ in the uniform estimate.
\begin{theorem}\label{cor-fullynonlinear}
	Let $u\in C(Q_1)$ be a viscosity solution to \eqref{eq-fullynonlinear} in $Q_1$ with  $\gamma>0$. Then $u$ is weakly differentiable in time and $u_t \in L^{\infty}_{\mathrm{loc}}(Q_1)$ with the uniform estimate
\begin{equation*}
	\|u_t\|_{L^\infty(Q_{1/2})} \leq C,
\end{equation*}
where $C>0$ is a constant depending only on $n$, $\gamma$, $\lambda$, $\Lambda$ and $\|u\|_{L^{\infty}(Q_1)}$.
\end{theorem}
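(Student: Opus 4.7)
The plan is to follow the same strategy as in the proof of \Cref{thm-plaplace}: transfer the uniform time derivative estimate of \Cref{thm-ut-fullynonlinear}, which is stated for smooth solutions of the regularized equation under the normalization $\|Du^\varepsilon\|_{L^\infty(Q_1)} \leq 1$, to the viscosity solution $u$ of \eqref{eq-fullynonlinear} via approximation, comparison and a suitable rescaling.

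First, since $u \in C(\overline{Q_{3/4}})$, we invoke the solvability of the regularized Cauchy--Dirichlet problem (the fully nonlinear analogue of \Cref{thm-solvability-quasilinear} recalled in the preliminaries) to produce, for each $\varepsilon \in (0,1)$, a unique classical solution $u^\varepsilon \in C^\infty(Q_{3/4}) \cap C(\overline{Q_{3/4}})$ of
\begin{equation*}
\left\{\begin{aligned}
u^\varepsilon_t &= (\varepsilon^2 + |Du^\varepsilon|^2)^{\gamma/2}\, F^\varepsilon(D^2 u^\varepsilon) && \text{in } Q_{3/4},\\
u^\varepsilon &= u && \text{on } \partial_p Q_{3/4}.
\end{aligned}\right.
\end{equation*}
The maximum principle gives $\|u^\varepsilon\|_{L^\infty(Q_{3/4})} \leq \|u\|_{L^\infty(Q_{3/4})}$, the uniform global modulus of continuity (the fully nonlinear analogue used in the $p$-Laplace case) together with Arzel\`a--Ascoli produces a subsequence $u^{\varepsilon_k} \to \bar u$ uniformly in $\overline{Q_{3/4}}$, and the stability theorem for viscosity solutions combined with uniform convergence $F^\varepsilon \to F$ identifies $\bar u$ as a viscosity solution of \eqref{eq-fullynonlinear} with $\bar u = u$ on $\partial_p Q_{3/4}$. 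The comparison principle for \eqref{eq-fullynonlinear} then yields $\bar u \equiv u$ in $\overline{Q_{3/4}}$.

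Second, we remove the normalization hypothesis $\|Du^\varepsilon\|_{L^\infty} \leq 1$ of \Cref{thm-ut-fullynonlinear} by scaling. The uniform interior Lipschitz estimate for the fully nonlinear regularized equation provides a constant $C_0 = C_0(n,\gamma,\lambda,\Lambda,\|u\|_{L^\infty(Q_1)})$ with $\|Du^\varepsilon\|_{L^\infty(Q_{1/2})} \leq C_0$. Setting $\rho = C_0 + 1$ and defining
\begin{equation*}
v^\varepsilon(x,t) \coloneqq \frac{1}{r\rho}\, u^\varepsilon\!\left(rx,\, r^2 \rho^{-\gamma}\, t\right) \quad \text{with } r = \tfrac{1}{2},
\end{equation*}
a direct computation shows that $v^\varepsilon$ solves a regularized equation of the same structural type (with $\varepsilon$ replaced by $\varepsilon/\rho$ and $F^\varepsilon$ unchanged, by homogeneity), to which \Cref{thm-ut-fullynonlinear} applies since $\|Dv^\varepsilon\|_{L^\infty(Q_1)} \leq 1$. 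Hence we obtain the scale-invariant bound $\|v^\varepsilon_t\|_{L^\infty(Q_{1/2})} \leq C$ for some $C = C(n,\gamma,\lambda,\Lambda)$, which, after scaling back, gives a bound on $\|u^\varepsilon_t\|_{L^\infty(Q_{r/2})}$ that is uniform in $\varepsilon$.

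Finally, the uniform $L^\infty$ bound on $u^\varepsilon_t$ together with the uniform convergence $u^{\varepsilon_k} \to u$ implies that $u^{\varepsilon_k}$ is uniformly Lipschitz in $t$ on compact subsets of $Q_{3/4}$, and passage to the limit shows that $u$ is itself Lipschitz in $t$ with the same constant. Consequently $u$ is weakly differentiable in $t$ with $u_t \in L^\infty_{\mathrm{loc}}(Q_1)$ and the stated estimate holds on $Q_{1/2}$ after a routine covering argument. The main technical step is \Cref{thm-ut-fullynonlinear} itself; the present corollary is the clean packaging of that Bernstein estimate for viscosity solutions, and no substantial obstacle arises beyond verifying that the rescaled equation still fits the hypotheses of \Cref{thm-ut-fullynonlinear}, which follows from the homogeneity of $F^\varepsilon$ and the structure of the $(\varepsilon^2 + |Du|^2)^{\gamma/2}$ factor under the chosen parabolic rescaling.
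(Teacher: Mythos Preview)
Your proposal is correct and follows exactly the approach the paper indicates: repeat the proof of \Cref{thm-plaplace} with the fully nonlinear preliminary results in place of the quasilinear ones, the intrinsic cylinder $Q_r^\rho = B_r \times (-\rho^{-\gamma} r^2,0]$, and \Cref{thm-ut-fullynonlinear} in place of \Cref{ut<C}. One small imprecision: you write that under the rescaling $F^\varepsilon$ is ``unchanged, by homogeneity,'' but $F^\varepsilon$ need not be positively homogeneous; the rescaled operator is $\widetilde F(M) = \tfrac{r}{\rho} F^\varepsilon(\tfrac{\rho}{r} M)$, which is different from $F^\varepsilon$ in general, yet it still satisfies \ref{F1} and \ref{F2} with the \emph{same} ellipticity constants $\lambda,\Lambda$, and since the constant in \Cref{thm-ut-fullynonlinear} depends only on $n,\gamma,\lambda,\Lambda$, the lemma applies verbatim and the argument goes through.
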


\begin{proof}
    One can repeat the proof of \Cref{thm-plaplace} with small changes. For example, define an intrinsic cylinder $Q^\rho_r=B_r\times (-\rho^{-\gamma}r^2,0]$, use the corresponding preliminary results in \Cref{sec:preliminary} for fully nonlinear equations instead of quasilinear ones and replace \Cref{ut<C} by \Cref{thm-ut-fullynonlinear}.
\end{proof}

We are now ready to prove the desired interior estimate.
\begin{proof}[Proof of \Cref{thm-fullynonlinear}]
    By \Cref{cor-fullynonlinear}, we have $u_t \in L^{\infty}(Q_{3/4})$ with the estimate \eqref{op_time_FNE}. Then we can understand $u$ as a viscosity solution to fully nonlinear degenerate elliptic equations
    \begin{equation*}
        |Du|^{\gamma}F(D^2u)=f\coloneqq u_t \in L^{\infty} \quad \text{in $B_{3/4}$.}
    \end{equation*}
    For the interior regularity of this type equations, an application of \Cref{thm:op_regularity} yields that $u(\cdot,t) \in C^{1,\frac{1}{1+\gamma}}(\overline{B_{1/2}})$ for all $t \in [-1/4,0]$. Therefore, the estimate \eqref{op_c1a_FNE} follows from \Cref{lem-simple}.
\end{proof}

\subsection{Time derivative estimates on the boundary}\label{subsec-bdry}
This subsection is devoted to the boundary regularity of viscosity solutions to the parabolic $p$-Laplace equations with Dirichlet boundary condition on the flat boundary. To be preicse, for a viscosity solution $u \in C(\overline{Q_1^+})$ to 
\begin{equation} \label{eq-bdry}
	\left\{ \begin{aligned}
		u_t&= \Delta_p u &&\text{in $Q_1^+\coloneqq Q_1 \cap \{x_n>0\}$}\\
		u&=\varphi &&\text{on $\{x_n=0\}$}
	\end{aligned}\right.
\end{equation}
with $p \in (2, \infty)$ and $\varphi \in C^2(\overline{Q_1^+})$, we are interested in the boundary regularity of $u$.

As usual, we first consider the approximating solutions $u^{\varepsilon}$ that solve the following regularized problems:
\begin{equation} \label{eq-bdry-regularized}
	\left\{ \begin{aligned}
		u_t&= a_\varepsilon^{ij}(Du) u_{ij} &&\text{in $Q_1^+$}\\
		u&=\varphi &&\text{on $\{x_n=0\}$},
	\end{aligned}\right.
\end{equation}
where
\begin{equation*}
    a_\varepsilon^{ij}(q) =(\varepsilon^2 +|q|^2)^{\frac{p-2}{2}}  \left(\delta_{ij} +(p-2) \frac{q_i q_j}{\varepsilon^2+|q|^2}\right) \quad \text{for } q \in \mathbb{R}^n
\end{equation*}
as defined in \Cref{sec:plaplace}.
In order to obtain the time derivative estimates for \eqref{eq-bdry-regularized}, we begin with the uniform boundary Lipschitz estimates. 
\begin{lemma} [Uniform boundary Lipschitz estimates] \label{lem-bdry-Lip}
Let $u^\varepsilon \in C^\infty(Q_1^+) \cap C(\overline{Q_1^+})$ be a solution to \eqref{eq-bdry-regularized} with $\varepsilon\in(0,1)$, $p \in (2, \infty)$ and $\varphi \in C^2(\overline{Q_1^+})$. Then 
\begin{equation*}
	|u^{\varepsilon}(x,t)-\varphi(x', 0, t)| \leq Cx_n \quad \text{for all } (x,t) \in \overline{Q_{1/2}^+},
\end{equation*}
where $C>0$ is a constant depending only on $n$, $p$, $\|u^{\varepsilon}\|_{L^\infty(Q_1^+)}$ and $\|\varphi\|_{C^2(\overline{Q_1^+})}$.
\end{lemma}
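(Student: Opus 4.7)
The plan is to carry out a classical barrier argument adapted to the regularized $p$-Laplace operator, uniformly in $\varepsilon$. Since $\varphi \in C^2(\overline{Q_1^+})$, I define the upper and lower barriers
\[
\bar\psi(x,t) \coloneqq \varphi(x',0,t) + A x_n - B x_n^2, \qquad \underline\psi(x,t) \coloneqq \varphi(x',0,t) - A x_n + B x_n^2,
\]
on a sub-cylinder $Q_r^+$ with $r \in (1/2, 1)$, where $A, B > 0$ are constants to be selected depending on $n$, $p$, $\|u^\varepsilon\|_{L^\infty(Q_1^+)}$, and $\|\varphi\|_{C^2(\overline{Q_1^+})}$. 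On the flat face $\{x_n = 0\} \cap Q_r^+$ the barriers coincide with $u^\varepsilon = \varphi$, so the proof reduces to verifying the sub/supersolution property on $Q_r^+$ and the ordering on the remaining part of $\partial_p Q_r^+$.

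The supersolution property of $\bar\psi$ follows from a direct calculation: its Hessian consists of the tangential block $D_{x'x'}^2\varphi(x',0,t)$ together with bottom-right entry $-2B$ and vanishing off-diagonal blocks, so
\[
\bar\psi_t - a_\varepsilon^{ij}(D\bar\psi)\bar\psi_{ij} = \varphi_t(x',0,t) - \sum_{i,j<n} a_\varepsilon^{ij}(D\bar\psi)\, \varphi_{x_i x_j}(x',0,t) + 2B\, a_\varepsilon^{nn}(D\bar\psi).
\]
I would choose $A$ large enough that $|D\bar\psi| \geq 1$ uniformly in $\varepsilon \in (0,1)$; the ellipticity \eqref{eq-quasi-ellipticity} then yields $a_\varepsilon^{nn}(D\bar\psi) \geq \lambda_p$, while the tangential-block coefficients are controlled by $C(n,p) A^{p-2}$. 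Picking $B$ sufficiently large depending on $n$, $p$, and $\|\varphi\|_{C^2}$ makes the positive term $2B a_\varepsilon^{nn}$ dominate the other two, so $\bar\psi$ is a supersolution; the subsolution property of $\underline\psi$ is symmetric.

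The main obstacle is to verify $\bar\psi \geq u^\varepsilon$ on the full parabolic boundary $\partial_p Q_r^+$. The flat piece is automatic. On the top and lateral parts away from the flat boundary, say where $x_n \geq \delta$ for some fixed $\delta > 0$, taking $A \gtrsim \|u^\varepsilon\|_{L^\infty}/\delta$ forces $Ax_n - Bx_n^2 \geq 2\|u^\varepsilon\|_{L^\infty}$ and the ordering follows. The delicate point is the corner region where $x_n$ can be arbitrarily small on the lateral boundary: the barrier value $Ax_n$ degenerates to zero there, while \emph{a priori} $u^\varepsilon - \varphi(x',0,t)$ need only vanish with a continuous (not linear) modulus. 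To close this gap, I would first establish a uniform-in-$\varepsilon$ boundary modulus of continuity for $u^\varepsilon$ at the flat boundary --- adapting \Cref{lem:mod_conti} to the half-space setting with $C^2$ data on $\{x_n=0\}$, or via an intrinsic-scaling H\"older estimate of Chen--DiBenedetto type --- and then localize the comparison on sub-cylinders tangent to $\{x_n=0\}$ via a covering argument. Once the barrier ordering is secured on $\partial_p Q_r^+$, the comparison principle gives $u^\varepsilon \leq \bar\psi$ on $Q_r^+$, hence $u^\varepsilon(x,t) - \varphi(x',0,t) \leq A x_n$ on $\overline{Q_{1/2}^+}$; together with the analogous lower bound from $\underline\psi$, this yields the claim. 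The hardest step is the uniform corner analysis.
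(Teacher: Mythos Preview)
The corner issue you flag is real, and your proposed fix does not close it. A uniform boundary modulus $\omega$ only yields $|u^\varepsilon(x,t)-\varphi(x',0,t)|\le\omega(x_n)$ near the flat face; unless $\omega$ is already Lipschitz (precisely what you are trying to prove), one has $\omega(s)/s\to\infty$ as $s\to0$, so on the lateral part of $\partial_pQ_r^+$ the barrier surplus $Ax_n-Bx_n^2$ is beaten by $\omega(x_n)$ for all small $x_n$, regardless of how large $A$ is. Shrinking $r$ or covering does not help: the same pointwise failure of the ordering recurs at the corner of each smaller cylinder. (There is also a minor scaling slip in your supersolution check: extracting only $a_\varepsilon^{nn}\ge\lambda_p$ while the tangential term carries the full weight $(\varepsilon^2+|D\bar\psi|^2)^{(p-2)/2}\sim A^{p-2}$ would force $B$ to depend on $A$, which then conflicts with $A-2Bx_n\ge1$ for $p>2$; keeping the full lower bound $a_\varepsilon^{nn}\ge\lambda_p(\varepsilon^2+|D\bar\psi|^2)^{(p-2)/2}$ cancels the $A^{p-2}$ factors and fixes this.)

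The paper sidesteps the corner entirely by using a barrier that touches the boundary data at a \emph{single point} rather than along the whole flat face. Concretely it takes
\[
v(x,t)=A\bigl(1-|x+e_n|^{-\beta}\bigr)-At+\varphi(x,t),
\]
with $\beta$ and then $A$ chosen large. In $\overline{Q_1^+}$ one has $|x+e_n|\ge1$, with equality only at $x=0$; hence $A(1-|x+e_n|^{-\beta})$ vanishes exactly at the origin and is bounded below by $A(1-2^{-\beta/2})>0$ on the lateral wall $\{|x|=1,\,x_n>0\}$, while $-At$ is large on the bottom $\{t=-1\}$. Thus $v\ge u^\varepsilon$ on all of $\partial_pQ_1^+$ by direct inspection, with no prior modulus needed, and the supersolution property follows from a short Pucci-operator computation. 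Comparison gives $u^\varepsilon(0',x_n,0)\le v(0',x_n,0)\le Cx_n+\varphi(0,0)$ along the normal ray, and translating the construction to other base points $(x_0',0,t_0)$ covers $\overline{Q_{1/2}^+}$.
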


\begin{proof}
Let
\begin{equation*}
	v(x, t) \coloneqq A(1- |x+e_n|^{-\beta})-At+\varphi(x, t),
\end{equation*}
where $\beta>0$ and $A>0$ will be chosen later. It follows from a direct calculation that
\begin{equation*}
    \begin{aligned}
        v_t&=-A+ \varphi_t,\\
        v_i&=\beta A |y|^{-\beta-2} y_i +\varphi_i,\\
        v_{ij}&=\beta A |y|^{-\beta-2} \delta_{ij}-\beta (\beta+2) A |y|^{-\beta-4} y_i y_j+\varphi_{ij},
    \end{aligned}
\end{equation*}
where $y=x+e_n$. By recalling the ellipticity condition \eqref{eq-ellipticity} and choosing sufficiently large $\beta>0$, we observe that
\begin{equation*}
    \begin{aligned}
        a^{ij}_{\varepsilon}(Dv) v_{ij} 
        &\leq (\varepsilon^2+|Dv|^2)^{\frac{p-2}{2}} \mathcal{M}^+_{\lambda_p,\Lambda_p}(D^2v) \\
        &\leq (\varepsilon^2+|Dv|^2)^{\frac{p-2}{2}} \left(n \Lambda_p \beta A |y|^{-\beta-2} -\lambda_p \beta (\beta+2) A |y|^{-\beta-2} + \mathcal{M}^+_{\lambda_p,\Lambda_p}(D^2 \varphi) \right) \\
        &\leq -C (\beta A)^{p-2} \cdot (\beta^2 A)=-C \beta^pA^{p-1},
    \end{aligned}
\end{equation*}
where $C>0$ depends only on $n$, $p$ and $\|\varphi\|_{C^2(\overline{Q_1^+})}$. We then choose $A>0$ large enough so that $v$ satisfies
\begin{equation*}
	\left\{ \begin{aligned}
		v_t&\geq a_\varepsilon^{ij}(Dv) v_{ij} &&\text{in $Q_1^+$}\\
		v&\geq \varphi &&\text{on $\{x_n=0\}$}\\
        v&\geq \|u^{\varepsilon}\|_{L^{\infty}(Q_1^+)}  &&\text{on $\partial_pQ_1^+ \setminus \{x_n=0\}$}.
	\end{aligned}\right.
\end{equation*}
We now apply the comparison principle between $u^{\varepsilon}$ and $v$ to have
\begin{equation*}
    u^{\varepsilon}(x, t) \leq v(x, t) \leq Cx_n+\varphi(0, 0) \quad \text{on $\{(0', x_n, 0) : 0 \leq x_n \leq 1/2\}$}.
\end{equation*}
By considering a standard translation argument, we obtain the desired upper bound. The lower bound also follows from a similar argument.
\end{proof}

By combining the interior Lipschitz estimates \Cref{thm:IJS19} and the boundary Lipschitz estimates \Cref{lem-bdry-Lip}, we deduce the following global Lipschitz estimates. Since the proof is standard, we omit the proof.
\begin{lemma}
   Let $u^\varepsilon \in C^\infty(Q_1^+) \cap C(\overline{Q_1^+})$ be a solution to \eqref{eq-bdry-regularized} with $\varepsilon\in(0,1)$, $p \in (2, \infty)$ and $\varphi \in C^2(\overline{Q_1^+})$. Then 
\begin{equation*}
	|u^{\varepsilon}(x, t)-u^{\varepsilon}(y, t)| \leq C|x-y| \quad \text{for all $(x, t), (y, t) \in \overline{Q_{1/2}^+}$},
\end{equation*}
where $C>0$ is a constant depending only on $n$, $p$, $\|u^{\varepsilon}\|_{L^\infty(Q_1^+)}$ and $\|\varphi\|_{C^2(\overline{Q_1^+})}$.
\end{lemma}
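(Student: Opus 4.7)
The plan is to combine the uniform boundary Lipschitz estimate (\Cref{lem-bdry-Lip}) with the uniform interior Lipschitz estimate (\Cref{thm:IJS19}) via a standard dichotomy on the distance from $\{x_n = 0\}$. Fix $(x, t), (y, t) \in \overline{Q_{1/2}^+}$ with $x_n \leq y_n$ (WLOG), set $r \coloneqq |x - y|$, and split into two cases based on the size of $x_n$ relative to $r$.

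In the boundary case $x_n \leq 8r$ (so that $y_n \leq 9r$), I would apply \Cref{lem-bdry-Lip} at both $(x, t)$ and $(y, t)$ to obtain $|u^\varepsilon(x, t) - \varphi(x', 0, t)| \leq Cx_n \leq 8Cr$ and the analogous estimate at $y$; combined with $|\varphi(x', 0, t) - \varphi(y', 0, t)| \leq \|\varphi\|_{C^1} r$, the triangle inequality closes the case. In the interior case $x_n > 8r$, I would rescale about $(x, t)$ at spatial scale $R \coloneqq x_n/4$ by setting
$$v(X, T) \coloneqq R^{-1}\bigl(u^\varepsilon(x + RX, t + R^2 T) - \varphi(x', 0, t)\bigr),$$
so that $v$ solves the regularized equation \eqref{eq:regularized} in $Q_1$ with the same $\varepsilon \in (0, 1)$ (the constant shift is harmless since the equation is translation-invariant in the dependent variable, and $R^2 \leq 1$ keeps the time window inside $Q_1^+$). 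A second application of \Cref{lem-bdry-Lip} (after a translation into a slightly larger half-cylinder containing $Q_R(x, t)$), together with the $C^1$-bound on $\varphi$, gives $\|v\|_{L^\infty(Q_1)} \leq C$ uniformly in $R$ and $\varepsilon$. \Cref{thm:IJS19} then provides a uniform Lipschitz bound on $v$ in $Q_{3/4}$, and unscaling via $|x - y|/R = 4r/x_n < 1/2$ yields $|u^\varepsilon(x, t) - u^\varepsilon(y, t)| \leq Cr$.

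The main technical obstacle is the $L^\infty$-normalization in the interior case: a naive rescaling without subtracting $\varphi(x', 0, t)$ would give $\|v\|_{L^\infty(Q_1)} \leq C/R$, which blows up as $R \to 0$ and would cost an uncontrolled factor in the Lipschitz constant produced by \Cref{thm:IJS19}. The pointwise bound from \Cref{lem-bdry-Lip} saves exactly the factor of $R$ needed, producing a constant in the final estimate that depends only on $n$, $p$, $\|u^\varepsilon\|_{L^\infty(Q_1^+)}$, and $\|\varphi\|_{C^2(\overline{Q_1^+})}$, as required.
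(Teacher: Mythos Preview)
Your proposal is correct and follows precisely the standard interior/boundary dichotomy that the paper has in mind; the paper itself omits the argument, stating only that it follows ``by combining the interior Lipschitz estimates \Cref{thm:IJS19} and the boundary Lipschitz estimates \Cref{lem-bdry-Lip}'' and that ``the proof is standard.'' Your key observation---that the rescaling $v(X,T)=R^{-1}\bigl(u^\varepsilon(x+RX,t+R^2T)-\varphi(x',0,t)\bigr)$ leaves both the equation and the parameter $\varepsilon$ unchanged, while \Cref{lem-bdry-Lip} supplies the uniform bound $\|v\|_{L^\infty(Q_1)}\leq C$---is exactly the mechanism that makes the combination work.
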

We are now ready to prove the time derivative estimates up to the boundary based on the Bernstein technique.
\begin{lemma} [Uniform boundary time derivative estimates]
	Let $u^\varepsilon \in C^\infty(Q_1^+) \cap C(\overline{Q_1^+})$ be a solution to \eqref{eq-bdry-regularized} with $\varepsilon\in(0,1)$, $p \in (2, \infty)$ and $\varphi \in C^2(\overline{Q_1^+})$. If $\|Du^\varepsilon\|_{L^{\infty}(Q_1^+)}\leq 1$, then 
\begin{equation*}
	\|u_t^\varepsilon\|_{L^\infty(Q_{1/2}^+)} \leq C,
\end{equation*}
where $C>0$ is a constant depending only on $n$, $p$ and $\|\varphi\|_{C^2(\overline{Q_1^+})}$.
\end{lemma}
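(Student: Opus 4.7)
The plan is to run the Bernstein argument of Lemma~\ref{ut<C} on $Q_1^+$, with a cut-off that equals $1$ on $\overline{Q_{1/2}^+}$ and vanishes on the \emph{lateral} and \emph{bottom} parts of $\partial_p Q_1^+$ but is allowed to be nonzero on the flat Dirichlet boundary $\{x_n=0\}$. The key point is that on $\{x_n=0\}$ we have $u^\varepsilon\equiv\varphi$, so differentiating in $t$ yields the automatic boundary estimate $u_t^\varepsilon=\varphi_t$, controlled by $\|\varphi\|_{C^2(\overline{Q_1^+})}$. Thus the flat boundary contributes only an additive error in $\|\varphi\|_{C^2}$, and the interior Bernstein computation handles everything else verbatim.

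Concretely, I would set $\eta(x,t)=\zeta(x)\tau(t)$ with $\zeta\in C_c^\infty(B_1)$, $\tau\in C^\infty(\mathbb{R})$, $\zeta\equiv 1$ on $B_{1/2}$, $\tau\equiv 1$ on $[-1/4,0]$, $\tau\equiv 0$ on $(-\infty,-1/2]$, so that $\eta\equiv 1$ on $\overline{Q_{1/2}^+}$ and $\eta\equiv 0$ on $\partial_p Q_1^+\setminus\{x_n=0\}$. Letting $A\coloneqq\|\eta u_t^\varepsilon\|_{L^\infty(Q_1^+)}$ (we may assume $A>1$), $\beta=\max\{3-p,0\}$, and $\delta=\delta(n,p)$ as in Lemma~\ref{ut<C}, define
\begin{equation*}
    v \coloneqq \eta^2(u_t^\varepsilon)^2 + \delta A(\varepsilon^2+|Du^\varepsilon|^2)^{(2-\beta)/2}.
\end{equation*}
The pointwise computation of $v_t-Lv$ on $Q_1^+$ depends only on the PDE and on the values of $\eta$ and its derivatives, so the identical calculation as in Lemma~\ref{ut<C} yields
\begin{equation*}
    v_t - Lv \leq -C_2\delta A(\varepsilon^2+|Du^\varepsilon|^2)^{-(p+\beta-2)/2}(u_t^\varepsilon)^2 \quad \text{in } Q_1^+,
\end{equation*}
for some $C_2=C_2(n,p)>0$.

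Next I would let $(x_0,t_0)\in\overline{Q_1^+}$ be a maximizer of $v$ and split into cases. If $(x_0,t_0)$ is parabolically interior to $Q_1^+$, the parabolic maximum principle combined with the above differential inequality forces $u_t^\varepsilon(x_0,t_0)=0$, so $v(x_0,t_0)\leq 2\delta A$; the same bound holds whenever $\eta(x_0,t_0)=0$, i.e., on the lateral or bottom of $\partial_p Q_1^+$. The new case is when $(x_0,t_0)$ lies on $\{x_n=0\}$: since $u^\varepsilon=\varphi$ there, differentiating in $t$ yields $u_t^\varepsilon(x_0,t_0)=\varphi_t(x_0,t_0)$, and hence
\begin{equation*}
    v(x_0,t_0) \leq \|\varphi\|_{C^2(\overline{Q_1^+})}^2 + 2\delta A.
\end{equation*}
Combining the cases with $A^2\leq v(x_1,t_1)\leq v(x_0,t_0)$, where $(x_1,t_1)$ realizes $|\eta u_t^\varepsilon|=A$, one obtains $A^2\leq C(n,p,\|\varphi\|_{C^2})+2\delta A$, which gives the desired uniform bound $\|u_t^\varepsilon\|_{L^\infty(Q_{1/2}^+)}\leq A\leq C'(n,p,\|\varphi\|_{C^2})$.

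The main technical obstacle is ensuring that $u_t^\varepsilon$ and $Du^\varepsilon$ extend continuously to the flat boundary $\{x_n=0\}$, so that the identity $u_t^\varepsilon=\varphi_t$ there is rigorous and the maximum of $v$ on $\overline{Q_1^+}$ is actually realized with the expected structure. This follows from classical boundary regularity for uniformly parabolic quasilinear equations with smooth coefficients and $C^2$ Dirichlet data (cf.\ \cite{LSU68}); alternatively, one may carry out the argument on the slightly shrunken domain $\{x_n\geq\rho\}\cap Q_1^+$, where $u^\varepsilon$ is interior-smooth, and then let $\rho\to 0^+$ in the final estimate. Since all constants appearing in the Bernstein computation depend only on $n$, $p$, $\|\eta\|_{C^2}$ and $\|\varphi\|_{C^2}$, the bound is uniform in $\varepsilon\in(0,1)$.
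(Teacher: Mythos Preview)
Your proposal is correct and follows essentially the same route as the paper: choose a cut-off $\eta$ that vanishes on $\partial_p Q_1^+\setminus\{x_n=0\}$ but not on the flat boundary, rerun the Bernstein computation of Lemma~\ref{ut<C} verbatim, and handle the extra case $(x_0,t_0)\in\{x_n=0\}$ via $u_t^\varepsilon=\varphi_t$ from the Dirichlet condition, yielding $A^2\le \|\varphi_t\|_{L^\infty}^2+2\delta A$. Your additional remark about regularity of $u_t^\varepsilon$ up to $\{x_n=0\}$ (or the approximation by $\{x_n\ge\rho\}$) is a sensible precaution that the paper leaves implicit.
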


\begin{proof}
    The proof is essentially the same as in \Cref{ut<C}, but there are small modifications. To be precise, we take $\eta \in C^\infty(Q_1^+)$ such that $\eta =  1 $ on $\overline{Q_{1/2}^+}$ and $\eta=0$ on $\partial_p Q_1^+ \setminus \{x_n=0\}$. Then the Bernstein technique for the quantity 
    \begin{equation*}
    v=\eta^2u_t^2+\delta A(\varepsilon^2+|Du|^2)^{\frac{2-\beta}{2}}    
    \end{equation*}
    with $A =\|\eta u_t\|_{L^{\infty}(Q_1^+)}$ gives the inequality \eqref{vt-lv<0}, provided that $\delta>0$ is sufficiently large and $\beta=\max\{3-p, 0\}$.
    
    Let $v$ attains its maximum at $(x_0, t_0) \in \overline{Q_1^+}$ and $|\eta u_t|$ attains its maximum at $(x_1, t_1) \in \overline{Q_1^+}$. If $(x_0, t_0)\in Q_1^+$, then we have $u_t(x_0, t_0)=0$ as before. Moreover, if $(x_0, t_0) \in \partial_p Q_1^+ \setminus \{x_n=0\}$, then $\eta(x_0, t_0)=0$ by the choice of $\eta$. Finally, if $(x_0, t_0) \in \{x_n=0\}$, then $u_t=\varphi_t$ from the Dirichlet boundary condition. Therefore, we conclude that
    \begin{equation*}
        A^2 \leq v(x_1, t_1) \leq v(x_0, t_0) \leq \|\varphi_t\|^2_{L^{\infty}(Q_1^+)} +2\delta A.
    \end{equation*}
\end{proof}

\begin{theorem}
    Let $u\in C(Q_1)$ be a viscosity solution to \eqref{eq-bdry} with $p \in (2, \infty)$ and $\varphi \in C^2(\overline{Q_1^+})$. Then $u$ is weakly differentiable in time and we have the uniform estimate
\begin{equation*}
	\|u_t\|_{L^\infty(Q^+_{1/2})} \leq C,
\end{equation*}
where $C>0$ is a constant depending only on $n$, $p$, $\|u\|_{L^{\infty}(Q_1^+)}$ and $\|\varphi\|_{C^2(\overline{Q_1^+})}$.
\end{theorem}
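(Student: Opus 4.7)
The plan is to follow the interior strategy from the proof of \Cref{thm-plaplace}, replacing the interior tools with the boundary counterparts established in this subsection. First, given $u\in C(\overline{Q_{3/4}^+})$, I would solve the regularized Cauchy--Dirichlet problem for \eqref{eq-bdry-regularized} on a slightly smaller half-cylinder $Q_{3/4}^+$ with lateral/initial data equal to $u$ and boundary data equal to $\varphi$ on $\{x_n=0\}\cap \overline{Q_{3/4}^+}$, obtaining smooth solutions $u^{\varepsilon}\in C^{\infty}(Q_{3/4}^+)\cap C(\overline{Q_{3/4}^+})$ (one needs a flat-boundary analogue of \Cref{thm-solvability-quasilinear}, which is standard from \cite{LSU68} for such smooth data). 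The maximum principle bounds $\|u^{\varepsilon}\|_{L^{\infty}}$ in terms of $\|u\|_{L^{\infty}(Q_1^+)}$ and $\|\varphi\|_{L^{\infty}}$.

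Next I would show the family $\{u^{\varepsilon}\}$ is equicontinuous on $\overline{Q_{3/4}^+}$ so that Arzel\`a--Ascoli yields a subsequential uniform limit $\bar u$; the stability of viscosity solutions together with the comparison principle for the parabolic $p$-Laplacian then identifies $\bar u\equiv u$. For equicontinuity near $\{x_n=0\}$, the boundary Lipschitz estimate \Cref{lem-bdry-Lip} supplies the missing control close to the flat portion, while the interior modulus \Cref{lem:mod_conti} handles the rest. Combined with the uniform boundary Lipschitz bound in the spatial variable just proved, this produces a constant $C_0=C_0(n,p,\|u\|_{L^{\infty}},\|\varphi\|_{C^2})$ such that $\|Du^{\varepsilon}\|_{L^{\infty}(Q_{1/2}^+)}\leq C_0$.

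Now I would perform the intrinsic scaling trick exactly as in the interior case: set $v^{\varepsilon}(x,t)\coloneqq (r\rho)^{-1}u^{\varepsilon}(rx, r^2\rho^{2-p}t)$ with $r=1/2$ and $\rho=C_0+1$, so that $v^{\varepsilon}$ solves the same type of regularized equation (with $\varepsilon$ replaced by $\varepsilon/\rho$) on $Q_1^+$ with the rescaled Dirichlet data $\tilde\varphi(x,t) \coloneqq (r\rho)^{-1}\varphi(rx, r^2\rho^{2-p}t)$ on $\{x_n=0\}$, and satisfies $\|Dv^{\varepsilon}\|_{L^{\infty}(Q_1^+)}\leq 1$. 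Crucially, $\|\tilde\varphi\|_{C^2}$ is controlled by $\|\varphi\|_{C^2}$ and the fixed rescaling constants, so the uniform boundary time derivative estimate from the preceding lemma applies and gives $\|v^{\varepsilon}_t\|_{L^{\infty}(Q_{1/2}^+)}\leq C$ with $C$ depending only on the allowed parameters. Scaling back produces the desired $L^{\infty}$ bound for $u^{\varepsilon}_t$.

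Finally I would pass to the limit $\varepsilon\to 0$: the uniform $L^{\infty}$ bound on $u^{\varepsilon}_t$ gives equicontinuity of $u^{\varepsilon}$ in time on compact subsets, and together with the already established uniform convergence $u^{\varepsilon}\to u$, Rademacher-type weak$^{*}$ compactness of $\{u^{\varepsilon}_t\}$ in $L^{\infty}$ yields that $u$ is weakly differentiable in time on $Q_{1/2}^+$ with the same bound on $u_t$. A standard covering argument, using translations parallel to $\{x_n=0\}$ and appropriate choices of subcylinders, extends the pointwise estimate to all of $Q_{1/2}^+$. The main technical point to check is that the intrinsic rescaling is compatible with the flat boundary (the half-space structure is preserved by $x\mapsto rx$), and that the $C^2$-norm of the rescaled boundary data stays controlled; once these are verified, the rest of the argument is a routine adaptation of the interior proof.
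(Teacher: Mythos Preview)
Your proposal is correct and follows exactly the route the paper intends: the paper's own proof consists of the single sentence ``One can follow the proof of \Cref{thm-plaplace},'' and you have accurately unpacked what that entails, substituting the boundary Lipschitz estimate \Cref{lem-bdry-Lip} and the boundary Bernstein lemma for their interior counterparts, and correctly observing that the intrinsic rescaling preserves the half-space and keeps $\|\tilde\varphi\|_{C^2}$ under control. There is nothing to add.
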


\begin{proof}
    One can follow the proof of \Cref{thm-plaplace}.
\end{proof}

\begin{remark}[Generalization to the fully nonlinear case]
    In fact, it is easy to check that this procedure is also available for the fully nonlinear degenerate framework.    
\end{remark}

\subsection{General quasilinear degenerate equations}
The uniform time derivative estimates for parabolic $p$-Laplace equations with $p\in(2,\infty)$ (\Cref{thm-plaplace}) can be extended to more general quasilinear degenerate equations \eqref{eq:generalquasilinear} with $\gamma>0$ and $p \in (1, \infty)$. 

\begin{theorem}\label{thm-generalquasi}
    Let $u\in C(Q_1)$ be a viscosity solution to \eqref{eq:generalquasilinear} in $Q_1$ with $\gamma>0$ and $p \in (1, \infty)$. Then $u$ is weakly differentiable in time and $u_t \in L^{\infty}_{\mathrm{loc}}(Q_1)$ with the uniform estimate
\begin{equation*}
	\|u_t\|_{L^\infty(Q_{1/2})} \leq C,
\end{equation*}
where $C>0$ is a constant depending only on $n$, $p$, $\gamma$ and $\|u\|_{L^{\infty}(Q_1)}$.
\end{theorem}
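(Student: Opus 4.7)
The plan is to mirror the argument of \Cref{ut<C} and \Cref{thm-ut-fullynonlinear}: regularize the equation, apply the Bernstein technique to obtain a uniform $L^\infty$-bound on $u^\varepsilon_t$, and pass to the limit $\varepsilon \to 0$. First I would introduce the regularization
\begin{equation*}
    u^\varepsilon_t = (\varepsilon^2 + |Du^\varepsilon|^2)^{\gamma/2} \left( \delta_{ij} + (p-2) \frac{u^\varepsilon_i u^\varepsilon_j}{\varepsilon^2 + |Du^\varepsilon|^2} \right) u^\varepsilon_{ij}
    = \widetilde F^\varepsilon(D^2 u^\varepsilon, Du^\varepsilon),
\end{equation*}
where $\widetilde F^\varepsilon(M,q) = b_\varepsilon(q) a^{ij}_\varepsilon(q) M_{ij}$ with $b_\varepsilon(q) = (\varepsilon^2 + |q|^2)^{\gamma/2}$, and $a^{ij}_\varepsilon(q) = \delta_{ij} + (p-2) q_i q_j / (\varepsilon^2 + |q|^2)$ is $(\lambda_p, \Lambda_p)$-uniformly elliptic with $\lambda_p = \min\{p-1,1\}$, $\Lambda_p = \max\{p-1,1\}$. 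This equation is uniformly parabolic for each fixed $\varepsilon > 0$; smooth solvability of the Cauchy--Dirichlet problem, uniform-in-$\varepsilon$ Lipschitz bounds and uniform global moduli of continuity follow from the methods of \cite{IJS19, FZ21, FZ23}. An intrinsic rescaling on the cylinder $B_r \times (-\rho^{-\gamma} r^2, 0]$ analogous to the one at the end of the proof of \Cref{thm-plaplace} reduces matters to the regime $\|Du^\varepsilon\|_{L^\infty(Q_1)} \leq 1$.

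The core step is the Bernstein technique applied to $v = \eta^2 u_t^2 + \delta A (\varepsilon^2 + |Du|^2)^{(2-\beta)/2}$ with $\eta$ a nonnegative cut-off equal to $1$ on $\overline{Q_{1/2}}$, $A = \|\eta u_t\|_{L^\infty(Q_1)}$, and constants $\delta > 0$, $\beta \in [0,1)$ to be chosen. Writing the linearization of $\widetilde F^\varepsilon$ at $(D^2 u, Du)$ as $L w = F_{M_{ij}} w_{ij} + F_{q_l} w_l$ where $F_{M_{ij}} = b_\varepsilon a^{ij}_\varepsilon$ and $F_{q_l} = \gamma u_l u_t / w + b_\varepsilon (a^{ij}_\varepsilon)_{q_l} u_{ij}$ with $w = \varepsilon^2 + |Du|^2$, a direct calculation (using $u_{tt}=Lu_t$ and $u_{kt}=Lu_k$) gives
\begin{equation*}
\begin{aligned}
    v_t - Lv &= 2\eta \eta_t u_t^2 - F_{M_{ij}} \bigl(2\eta_i \eta_j u_t^2 + 2\eta \eta_{ij} u_t^2 + 8\eta \eta_i u_t u_{tj} + 2\eta^2 u_{ti} u_{tj}\bigr) \\
    &\quad - 2\eta \eta_l u_t^2 F_{q_l} + (2-\beta)\delta A w^{-\beta/2-1} F_{M_{ij}} \bigl( \beta u_k u_l u_{ki} u_{lj} - w u_{ki} u_{kj} \bigr).
\end{aligned}
\end{equation*}

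Using the ellipticity of $a^{ij}_\varepsilon$, Young's inequality, the pointwise control $|u_t| \leq n\Lambda_p w^{\gamma/2} \|D^2 u\|$ together with $\eta |u_t| \leq A$, the bound $|(a^{ij}_\varepsilon)_{q_l}| \leq C w^{-1/2}$, and the Cauchy--Schwarz manipulation \eqref{apply_CSI}, each of the bad terms can be bounded by a multiple of $w^{\rho}\|D^2 u\|^2$ with $\rho \in \{\gamma, 3\gamma/2, (2\gamma-1)/2\}$, while the last bracket contributes $-(2-\beta)(1-\beta) \lambda_p \delta A w^{(\gamma-\beta)/2} \|D^2 u\|^2$. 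Comparing exponents of $w$, the binding constraint is $(\gamma-\beta)/2 \leq (2\gamma-1)/2$, i.e., $\beta \geq 1-\gamma$. Choosing $\beta = \max\{1-\gamma,0\} \in [0,1)$ (which uses $\gamma > 0$) and $\delta$ sufficiently large depending only on $n,p,\gamma$ yields
\begin{equation*}
    v_t - Lv \leq -C \delta A w^{-(\gamma+\beta)/2} u_t^2.
\end{equation*}
At the maximum of $v$ on $\overline{Q_1}$ this forces either $u_t = 0$ or $\eta = 0$; in both cases $A^2 \leq v_{\max} \leq 2\delta A$, hence $\|u_t\|_{L^\infty(Q_{1/2})} \leq 2\delta$. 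Scaling back and passing to the limit $\varepsilon \to 0$ via the uniform modulus of continuity, the stability theorem and the comparison principle, exactly as in the proof of \Cref{thm-plaplace}, completes the argument.

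\textbf{Main obstacle.} The combination of the $p$-Laplace structure and the $|Du|^\gamma$ degeneracy produces, through the $q$-dependence of $a^{ij}_\varepsilon$, the extra first-order term $b_\varepsilon (a^{ij}_\varepsilon)_{q_l} u_{ij}$ in $F_{q_l}$, which has no analogue in the pure fully nonlinear problem \eqref{eq-fullynonlinear}. Fortunately, $|(a^{ij}_\varepsilon)_{q_l}| \lesssim w^{-1/2}$ implies that this term has the same order $w^{(2\gamma-1)/2}\|D^2 u\|^2$ as the leading piece $\gamma u_l u_t / w$, so the critical exponent comparison is identical to that in \Cref{thm-ut-fullynonlinear} and the same choice $\beta = \max\{1-\gamma, 0\}$ works, with no new restriction beyond $p \in (1,\infty)$ and $\gamma > 0$.
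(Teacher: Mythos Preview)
Your proposal is correct and follows essentially the same route as the paper: the same regularization, the same auxiliary function $v=\eta^2 u_t^2+\delta A(\varepsilon^2+|Du|^2)^{(2-\beta)/2}$, the same exponent comparison leading to $\beta=\max\{1-\gamma,0\}$, and the same limiting procedure. Your observation that the extra first-order piece $b_\varepsilon(a^{ij}_\varepsilon)_{q_l}u_{ij}$ coming from the $p$-Laplace structure lands at the same critical order $w^{(2\gamma-1)/2}\|D^2u\|^2$ as the term produced by $\gamma u_l u_t/w$ is exactly the point the paper makes, so no new ingredient is needed.
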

It is noteworthy that \Cref{thm-generalquasi} recovers \Cref{thm-plaplace} by considering a special choice of $\gamma=p-2>0$. Since the proof is similar to the one of \Cref{ut<C} or \Cref{thm-ut-fullynonlinear}, we present the sketch of the proof.
\begin{lemma} [Uniform time derivative estimates]  
	Let $u^\varepsilon \in C(\overline{Q_1}) \cap C^\infty(Q_1)$ be a solution to 
    \begin{equation*} 
   u^{\varepsilon}_t= (\varepsilon^2 +|Du^{\varepsilon}|^2)^{\frac{\gamma}{2}}  \left(\delta_{ij} +(p-2) \frac{u^{\varepsilon}_i u^{\varepsilon}_j}{\varepsilon^2+|Du^{\varepsilon}|^2}\right) u^{\varepsilon}_{ij} \quad \text{in } Q_1
\end{equation*}
with $\varepsilon \in (0,1)$, $p \in (1, \infty)$ and $\gamma > 0$. If $\|Du^\varepsilon\|_{L^{\infty}(Q_1)}\leq 1$, then 
\begin{equation*}
	\|u_t^\varepsilon\|_{L^\infty(Q_{1/2})} \leq C,
\end{equation*}
where $C>0$ is a constant depending only on $n$, $p$ and $\gamma$.
\end{lemma}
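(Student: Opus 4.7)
The plan is to run the Bernstein scheme of \Cref{ut<C} essentially verbatim, with the single difference that one recalibrates the power counting to accommodate the independence of $\gamma$ and $p$. Set
$$a^{ij}_{\varepsilon}(q) \coloneqq (\varepsilon^2+|q|^2)^{\gamma/2}\left(\delta_{ij}+(p-2)\frac{q_i q_j}{\varepsilon^2+|q|^2}\right),$$
so that the equation reads $u^\varepsilon_t = a^{ij}_\varepsilon(Du^\varepsilon)u^\varepsilon_{ij}$. Exactly as in \eqref{eq-quasi-ellipticity} one checks the degenerate ellipticity
$$\lambda_p(\varepsilon^2+|q|^2)^{\gamma/2}|\xi|^2 \le a^{ij}_\varepsilon(q)\xi_i\xi_j \le \Lambda_p(\varepsilon^2+|q|^2)^{\gamma/2}|\xi|^2,$$
where the assumption $p\in(1,\infty)$ is exactly what guarantees $\lambda_p>0$. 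Together with this we have the pointwise bounds $|u^\varepsilon_t|\le \sqrt{n}\Lambda_p(\varepsilon^2+|Du^\varepsilon|^2)^{\gamma/2}\|D^2u^\varepsilon\|$ and $\|a^{ij}_{\varepsilon,q_l}(Du^\varepsilon)\|\le C(n,p)(\varepsilon^2+|Du^\varepsilon|^2)^{(\gamma-1)/2}$; these are the same as in \Cref{ut<C} under the substitution $p-2\leftrightarrow\gamma$.

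Next, for a cut-off $\eta\in C_c^\infty(Q_1)$ with $\eta\equiv 1$ on $\overline{Q_{1/2}}$, set $A\coloneqq\|\eta u^\varepsilon_t\|_{L^\infty(Q_1)}$ (which we may assume exceeds $1$) and introduce the same auxiliary function
$$v \coloneqq \eta^2(u^\varepsilon_t)^2 + \delta A(\varepsilon^2+|Du^\varepsilon|^2)^{(2-\beta)/2}$$
with $\delta>0$ and $\beta\in[0,1)$ to be chosen. Writing $Lw=a^{ij}_\varepsilon(Du^\varepsilon)w_{ij}+a^{ij}_{\varepsilon,q_l}(Du^\varepsilon)u^\varepsilon_{ij}w_l$ for the linearization, $v_t-Lv$ decomposes exactly as in \eqref{vt-Lv}, and each summand is controlled by the arguments of \eqref{eq-est-1}--\eqref{eq-est-5}; in particular, the Cauchy--Schwarz identity \eqref{apply_CSI} is untouched. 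After converting every $(u^\varepsilon_t)^2$ factor into a multiple of $\|D^2u^\varepsilon\|^2$ via $(u^\varepsilon_t)^2\le C(\varepsilon^2+|Du^\varepsilon|^2)^{\gamma}\|D^2u^\varepsilon\|^2$, and using $(u^\varepsilon_t)^2\le A|u^\varepsilon_t|$ in the $a^{ij}_{\varepsilon,q_l}$ contribution, the right-hand side of $v_t-Lv$ becomes a sum of $\|D^2u^\varepsilon\|^2$ terms carrying $(\varepsilon^2+|Du^\varepsilon|^2)$-exponents $\gamma$, $3\gamma/2$, $(2\gamma-1)/2$, and the decisive negative exponent $(\gamma-\beta)/2$.

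The main (and only) new step is the exponent comparison, which replaces the one from the $p$-Laplace proof. Since $\gamma>0$, we have $\min\{\gamma,\,3\gamma/2,\,(2\gamma-1)/2\} = (2\gamma-1)/2$, so the dominance condition $(\gamma-\beta)/2\le(2\gamma-1)/2$ reduces to $\beta\ge 1-\gamma$. The hard part is that for small $\gamma$ the required $\beta$ is pushed close to $1$, and one must verify it still lies in $[0,1)$ so that the prefactor $(1-\beta)$ in front of the negative term stays strictly positive; choosing $\beta\coloneqq\max\{1-\gamma,\,0\}\in[0,1)$ threads this needle. Any sufficiently large $\delta=\delta(n,p,\gamma)$ then makes the negative term absorb the others, giving the analog of \eqref{vt-lv<0},
$$v_t - Lv \le -C\,\delta A\,(\varepsilon^2+|Du^\varepsilon|^2)^{-(\gamma+\beta)/2}\,(u^\varepsilon_t)^2$$
with $C=C(n,p,\gamma)>0$.

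Evaluating $v$ at its maximum on $\overline{Q_1}$ (an interior maximum forces $u^\varepsilon_t=0$; otherwise $\eta$ vanishes at the maximum point) and at the maximizer of $|\eta u^\varepsilon_t|$, the same concluding step as in \Cref{ut<C} yields $A^2\le 2\delta A$, that is, $\|u^\varepsilon_t\|_{L^\infty(Q_{1/2})}\le 2\delta$. \Cref{thm-generalquasi} is then deduced from this uniform estimate by repeating the approximation argument used for \Cref{thm-plaplace}, this time with the intrinsic rescaling $v^\varepsilon(x,t)=(r\rho)^{-1}u^\varepsilon(rx,\,r^2\rho^{-\gamma}t)$ on the cylinder $Q_r^\rho=B_r\times(-\rho^{-\gamma}r^2,0]$ associated with the operator \eqref{eq:generalquasilinear}.
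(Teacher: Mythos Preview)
Your proposal is correct and follows the paper's own proof essentially line by line: the same auxiliary function $v=\eta^2(u^\varepsilon_t)^2+\delta A(\varepsilon^2+|Du^\varepsilon|^2)^{(2-\beta)/2}$, the same linearization and estimates, the same exponent comparison leading to $\beta=\max\{1-\gamma,0\}$, and the same maximum-principle conclusion. The only cosmetic slip is that the constant in the bound for $\|a^{ij}_{\varepsilon,q_l}\|$ should depend on $\gamma$ as well as $n,p$.
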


\begin{proof}
Let us point out several differences compared to the proof of \Cref{ut<C}. Indeed, for
\begin{equation*}
    a_\varepsilon^{ij}(q) \coloneqq (\varepsilon^2 +|q|^2)^{\frac{\gamma}{2}}  \left(\delta_{ij} +(p-2) \frac{q_i q_j}{\varepsilon^2+|q|^2}\right) \quad \text{for } q \in \mathbb{R}^n,
\end{equation*}
a direct calculation implies that
\begin{equation*} 
    |u_t| \leq \sqrt{n}\Lambda_p(\varepsilon^2 +|Du|^2)^{\frac{\gamma}{2}}\|D^2u\|
\end{equation*}
and
\begin{equation*}
    \|a_{q_l}^{ij}(Du)\| \leq C_1(n,p,\gamma)(\varepsilon^2 +|Du|^2)^{\frac{\gamma-1}{2}}.
\end{equation*}
By utilizing these estimates as in the proof of \Cref{ut<C}, we can see that 
\begin{equation*}
    v\coloneqq \eta^2 u_t^2 + \delta A(\varepsilon^2 + |Du|^2)^{\frac{2-\beta}{2}}
\end{equation*}
satisfies 
\begin{align*}
    v_t - Lv
    &\leq 2\eta |\eta_t|u_t^2\\
    &\quad +(\varepsilon^2 +|Du|^2)^{\frac{\gamma}{2}} \left(-2\lambda_p |D\eta|^2 +2\sqrt{n}\Lambda_p \|D^2\eta\| \eta + 8\lambda_p^{-1}\Lambda_p^2|D\eta|^2 \right)u_t^2 \\
    &\quad +2\sqrt{n}\Lambda_p C_1 A (\varepsilon^2 +|Du|^2)^{\frac{2\gamma-1}{2}}   |D\eta| \|D^2u\|^2 \\
    &\quad -\lambda_p (2-\beta)(1-\beta)\delta A (\varepsilon^2 +|Du|^2)^{\frac{\gamma-\beta}{2}}\|D^2 u\|^2.
\end{align*}
By taking $\beta=\max\{1-\gamma,0\} \in [0, 1)$, we have
\begin{equation*}  
    v_t - Lv
    \leq -C_2\delta A (\varepsilon^2 +|Du|^2)^{-\frac{\gamma+\beta}{2}} u_t^2,
\end{equation*}
provided that $\delta=\delta(n,p,\gamma)>0$ sufficiently large. The remainder of the proof is essentially the same as \Cref{ut<C}.
\end{proof}

\noindent{\bf Data availability:} Data sharing is not applicable to this article as no datasets were generated or analyzed during the current study.

%
%

\end{document}